\theoremstyle{definition}
\newtheorem{thm}{Theorem}[section]
\newtheorem{lem}[thm]{Lemma}
\newtheorem{rem}[thm]{Remark}
\numberwithin{equation}{section}
\newcommand{\subjclass}[1]{\bigskip\noindent\emph{2010 Mathematics Subject Classification:}\enspace#1}
\newcommand{\keywords}[1]{\noindent\emph{Keywords:}\enspace#1}
\newcommand {\R}	{\mathbb{R}}
\newcommand {\N}	{\mathbb{N}}
\DeclareMathOperator{\Id}{Id}
\newcommand\bff{{\mathbf f}}
\newcommand\bfg{{\mathbf g}}
\newcommand\bfn{{\mathbf n}}
\newcommand\bfu{{\mathbf u}}
\newcommand\bfv{{\mathbf v}}
\newcommand\bfx{{\mathbf x}}
\newcommand\bfA{{\mathbf A}}
\newcommand\bfH{{\mathbf H}}
\newcommand\bfvecH{\vec{\mathbf H}}
\newcommand\bfI{{\boldsymbol I}}
\newcommand\bfM{{\mathbf M}}
\newcommand\bfpi{{\boldsymbol{\pi}}}
\newcommand\qand{\quad\hbox{ and }\quad}
\newcommand\qfor{\quad\hbox{ for }\quad}
\renewcommand{\d}{\text{d}}
\newcommand{\Ga}{\Gamma}
\newcommand{\mat}{\partial^{\bullet}}
\newcommand{\diff}{\frac{\d}{\d t}}
\newcommand{\nb}{\nabla}
\newcommand{\pa}{\partial}
\newcommand{\spn}{\textnormal{span}}
\def \t {(t)}
\def \to {\rightarrow}
\newcommand{\vphi}{\varphi}
\newcommand{\phiv}{\varphi^v}
\newcommand{\phin}{\varphi^\n}
\newcommand{\phiH}{\varphi^H}
\newcommand{\xs}{\bfx^\ast}
\newcommand{\vecH}{\vec{H}}
\newcommand{\n}{\nu}
\newcommand{\dimR}{n}
\newcommand{\dimGa}{m}
\newcommand{\dof}{N}
\begin{document}


\baselineskip=17pt


\title{A convergent finite element algorithm for mean curvature flow in arbitrary codimension}

\author{
	Tim Binz\\
	TU Darmstadt, Fachbereich Mathematik, \\ Schlossgartenstrasse 7, 64289 Darmstadt, Germany, \\
	binz@mathematik.tu-darmstadt.de\\[2mm]
	Bal\'{a}zs Kov\'{a}cs\\
	University of Regensburg, Faculty of Mathematics, \\ 93040 Regensburg, Germany \\
	balazs.kovacs@mathematik.uni-regensburg.de
}

\date{\today}

\maketitle


\begin{abstract}
Optimal-order uniform-in-time $H^1$-norm error estimates are given for semi- and full discretizations of mean curvature flow of surfaces in arbitrarily high codimension.
The proposed and studied numerical method is based on a parabolic system coupling the surface flow to evolution equations for the mean curvature vector and for the orthogonal projection onto the tangent space. The algorithm uses evolving surface finite elements and linearly implicit backward difference formulae. 
This numerical method admits a convergence analysis in the case of finite elements of polynomial degree at least two and backward difference formulae of orders two to five.
Numerical experiments in codimension 2 illustrate and complement our theoretical results.

\subjclass{Primary 35R01,
	 53E10, 
	 65M60, 
	 65M15, 
	 65M12. 
 }

\keywords{mean curvature flow; higher codimension; evolving surface finite elements; backward difference formulae; error estimates.}
\end{abstract}

\section{Introduction}

In this paper we prove semi- and fully discrete error bounds of a numerical algorithm for the evolution of a closed $\dimGa$-dimensional surface $\Ga\t \subset \R^\dimR$ evolving under mean curvature flow of \emph{arbitrary codimension}, with a particular interest for codimension at least $2$.

The key idea of the paper is to derive \emph{non-linear} parabolic evolution equations for the mean curvature vector $\vecH$ and the orthogonal projection onto the tangent space $\pi$ along the flow. Taking a similar approach as recent previous work for the numerical analysis of mean curvature flow \cite{MCF}, which first utilised such an approach using similar evolution equations for the (scalar) mean curvature and surface normal. 
For the numerical analysis of other geometric flows using this approach, see \cite{MCF_soldriven,Willmore,MCF_generalised}. 

Similarly as Huisken \cite{Huisken1984} did for codimension 1 mean curvature flow, in higher codimension \cite{AndrewsBaker} (or see \cite{Smoczyk_survey}) have derived numerous geometric evolution equations for various geometric quantities. 
For mean curvature flow in arbitrary codimension (and dimension), we derive here evolution equations for the mean curvature vector and the orthogonal projection onto the tangent space, which (to our knowledge) were not yet known in the literature. Until the present work it was also not evident that these evolution equations for $\vecH$ and $\pi$ form a closed system that does not involve further geometric quantities.

\medskip
We give a brief overview of mean curvature flow in higher codimension:

In two papers \cite{Altschuler,AltschulerGrayson} Altschuler and Grayson have proved the first results for curves in $\R^3$, namely that unlike for planar curves singularities may occur in finite time.
Ambrosio and Soner \cite{AmbrosioSoner_levelset,AmbrosioSoner_1} have studied high codimension mean curvature flow using a level set approach.
Andrews and Baker \cite{AndrewsBaker} have proved that submanifolds sufficiently close to the round sphere smoothly collapse to round points in finite time, and have proved pinching estimates. They derive evolution equations for geometric quantities along the flow, similar to \cite{Huisken1984} in codimension 1, but have not derived the closed system of evolution equations derived and used in this paper.
These pinching estimates were greatly refined by Naff in \cite{Naff}.
Ancient solutions were recently studied by Lynch and Nguyen \cite{LynchNguyen}.
In the survey article \cite{Smoczyk_survey} Smoczyk presents results on: short-time existence and uniqueness, long-time existence and convergence, and singularities.
The survey type article by Wang \cite{Wang_MCF_codim} collects several theorems on regularity, global existence and convergence, see also \cite{Wang_lecturenotes}.

We also give a literature overview on numerical methods for curve shortening and mean curvature flow in \emph{codimension at least $2$} (only giving a brief outlook on other flows):

Dziuk \cite{Dziuk_CSF_1994} and Deckelnick and Dziuk \cite{DeckelnickDziuk1994}  have both proposed and analysed finite element algorithms for curve shortening flow for curves in possibly higher codimension, and have proved semi-discrete $L^\infty(L^2)$- and $L^2(H^1)$-norm error estimates.
Carlini, Falcone and Ferretti \cite{CarliniFalconeFerretti_CSF_codim_2} proposed a semi-Lagrangian scheme for curve shortening flow in codimension $2$ (i.e., closed curves in $\R^3$), and have analysed its (conditional) consistency.
Pozzi proposed a numerical method for anisotropic curve shortening and mean curvature flow in higher codimension in \cite{Pozzi_CSF_codim,Pozzi_surf_codim}, and proved semi-discrete error estimates for curves in arbitrary codimension.
Barrett, Garcke and N\"urnberg \cite{BGN_curve_grad_flow} proposed numerical algorithms -- allowing tangential movements -- for gradient flows (including curve shortening and Willmore flow) for closed curves in $\R^\dimR$ ($\dimR\geq2$). In \cite{BGN_curves_2012} they discretized high-order flows for plane and space curves.
D\"orfler and N\"urnberg \cite{DoerflerNuernberg} have proposed finite element discretizations for gradient flows for general curvature energies of space curves. 
A tangentially redistributing scheme for 3-dimension curve evolutions was proposed in \cite{MikulaUrban}.

Apart from the convergence results by Dziuk \cite{Dziuk_CSF_1994} and Pozzi \cite{Pozzi_CSF_codim}, both results for \emph{curves in $\R^\dimR$}, we are not aware of any convergence results for mean curvature flow in \emph{high codimension}.

\medskip
The newly derived non-linear geometric evolution equations for the mean curvature vector $\vecH$ and orthogonal projection $\pi$ are coupled to the velocity law $v = \vecH$ and the ordinary differential equation (ODE) $\dot X = v \circ X$ describing the surface evolution. This geometric coupled system is then discretized using evolving surface finite elements (of degree at least 2) and using linearly implicit backward difference formulae (of order 2 to 5), under a mild step size restriction. 

We prove optimal-order time-uniform $H^1$-norm semi- and fully discrete error estimates for mean curvature flow in arbitrary codimension and dimension, utilising the newly derived geometric coupled system, for the surface position $X$ and the velocity $v$, and the geometric quantities $\vecH, \pi$. 
The functional-analytic setting for the spatial semi-discrete coupled system of mean curvature flow in arbitrary codimension is fundamentally different from the one for mean curvature flow \cite{MCF}. Still, the matrix--vector formulations of their respective semi-discretisations formally coincide. We regard this as an advantage of our algorithm.
They both use the same mass and stiffness matrices with different block-sizes, but the non-linear terms are more complicated than in \cite{MCF}, however both are locally Lipschitz continuous. 
Due to this purely formal analogy of the matrix--vector formulations, the convergence proofs for arbitrary codimension mean curvature flow are also formally coinciding with the respective proofs in \cite{MCF} for mean curvature flow. 
More precisely, since the non-linear terms are locally Lipschitz, the stability proofs of \cite{MCF} directly apply to the present case as well. Consistency proofs are shown using similar arguments. 

Arguably, for \emph{curves} the algorithm proposed here is more complicated to implement than the methods of Dziuk \cite{Dziuk_CSF_1994} and Deckelnick and Dziuk \cite{DeckelnickDziuk1994}, however, the algorithm proposed here comes with a convergence analysis for \emph{surfaces}.

The paper is organised as follows:
Section~\ref{section:evolution equations} 
introduces basic notations for arbitrary codimension submanifolds, and contains the main technical results of the paper: deriving the evolution equations for $\vecH$ and $\pi$.
Section~\ref{section:ESFEM} 
contains the evolving surface finite element spatial semi-discretization, and the corresponding matrix--vector formulation, discussing its relation to the matrix--vector formulation of mean curvature flow \cite{MCF}.
Section~\ref{section:BDF} 
presents the linearly implicit backward differentiation formulas.
Section~\ref{section:main results} 
contains the main results of the paper, semi- and fully discrete error bounds.
Section~\ref{section:numerics} 
reports on a large number of numerical experiments, to illustrate and complement our theoretical results, including convergence tests, comparisons with Dziuk's algorithm \cite{Dziuk_CSF_1994}, and present some examples from the literature.

\section{Evolution equations for mean curvature flow}
\label{section:evolution equations}

\subsection{Basic notions and notation}
\label{subsection:basic notions}

We start by introducing some basic concepts and notations. 

We consider an evolving $\dimGa$-dimensional (with $\dimGa = 1,2,3$) closed submanifold $\Ga[X] \subset \R^\dimR$, in other words $\Ga[X]$ is an $\dimGa$-dimensional submanifold in $\R^\dimR$ of codimension $\dimR-\dimGa$. In this paper we allow submanifolds of arbitrary codimension $\dimR-\dimGa \geq 1$.

The $\dimGa$-dimensional submanifold $\Ga[X]$ is given as the image
$$
\Ga[X] = \Ga[X(\cdot,t)] = \{ X(p,t) \,:\, p \in \Ga^0 \},
$$
of a smooth mapping $X \colon \Ga^0\times [0,T]\to \R^\dimR$ of an initial submanifold $\Ga^0$ such that $X(\cdot,t)$ is an embedding for every $t$, and $X(p,0)=p$. Here, the initial submanifold $\Ga^0 \subset \R^\dimR$ is smooth and of dimension $m$. 
In view of the subsequent numerical discretization, it is convenient to think of $X(p,t)$ as the position at time $t$ of a moving particle with label $p$, and of $\Ga[X] $ as a collection of such particles. 
This approach is similar as in \cite{MCF}.

The {\it velocity} $v(x,t)\in\R^\dimR$ at a point $x=X(p,t)\in\Ga[X(\cdot,t)]$  equals
\begin{equation}
\label{eq:velocity ODE}
\partial_t X(p,t)= v(X(p,t),t).
\end{equation}
For a known velocity field $v$, the position $X(p,t)$ at time $t$ of the particle with label $p$ is obtained by solving the ordinary differential equation \eqref{eq:velocity ODE} from $0$ to $t$ for a fixed $p$.

For a function $u(x,t)$ ($x\in \Ga[X]$, $0\le t \le T$) we denote the {\it material derivative} (with respect to the parametrization $X$) as
$$
\mat u(x,t) = \frac \d{\d t} \,u(X(p,t),t) \quad\hbox{ for } \ x=X(p,t).
$$

On a regular submanifold we denote by $g_{ij} = \langle \partial_i X, \partial_j X \rangle$ ($i,j=1,\dotsc,\dimGa$) the induced metric, and by $(g^{ij})$ its inverse.
Moreover, we denote by
\begin{equation*}
A(x) = (A_{ij}(x))_{i,j=1}^\dimGa
= \big( (\partial_i \partial_j X(p,t))^\bot \big)_{i,j=1}^\dimGa = \big( \partial_i \partial_j X - \Ga_{ij}^k \partial_k X \big)_{i,j=1}^\dimGa 
\in (\R^\dimR)^{m \times m} 
\end{equation*}
the second fundamental form. Here $\,^\bot$ denotes the orthogonal projection to the orthogonal complement of the tangent space of $\Ga[X]$ at $x = X(p,t)$.

Further, the \emph{mean curvature vector} is the trace of the Weingarten map, i.e.
\begin{equation*}
\vecH 
= g^{ij} A_{ij} =(g^{ij} \partial_i \partial_j X)^\bot = g^{ij} \partial_i \partial_j X - g^{ij} \Ga_{ij}^k \partial_k X
\in \R^\dimR ,
\end{equation*}
i.e.~we use the sign convention that for a sphere of radius $R$ the mean curvature vector $\vecH$ points inwards and has length $\dimGa/R$.

For every $x \in \Ga[X]$, we denote the \emph{orthogonal projection} from $\R^\dimGa$ to the tangent space of the submanifold $\Ga[X(\cdot,t)]$, at the point $x = X(p,t)$, by 
\begin{equation*}
\pi(x) 
= g^{ij} \partial_i X \otimes \partial_j X
\in \R^{\dimR \times \dimR} .
\end{equation*}


%

On any regular submanifold $\Ga\subset\R^\dimR$, the  {\it tangential gradient} $\nabla_{\Ga}u \colon \Ga\to\R^\dimR$ of a function $u \colon \Ga\to\R$ is given by $\nabla_{\Ga} u := g^{ij} \, \partial_i u \, \partial_j X$, and in the case of a vector-valued function $u=(u_1,\dotsc,u_\dimR)^T \colon \Ga\to\R^\dimR$, we define component-wise
$\nabla_{\Ga}u=
(\nabla_{\Ga}u_1, \dotsc, 
\nabla_{\Ga}u_\dimR)$, i.e. we use the convention that the gradient of $u$ has the gradient of the components as column vectors. 
We denote by $\varDelta_{\Ga} u=\nabla_{\Ga}\cdot \nabla_{\Ga}u$ the {\it Laplace--Beltrami operator} applied to $u$, so that on a closed surface $\int_\Ga \varDelta_\Ga u \, v = -\int_\Ga \nabla_{\Ga} u \cdot \nabla_{\Ga} v$, cf.~\cite{DziukElliott_acta}. 

%
%
%
%
%
%

\subsection{Evolution equations for orthogonal projection and mean curvature vector of a submanifold evolving under mean curvature flow}

Mean curvature flow (in arbitrary codimension) sets the velocity \eqref{eq:velocity ODE} of the submanifold $\Ga[X]$ to 
\begin{equation}
\label{eq:velocity law}
v = \vecH .
\end{equation}

For geometric surface flows, e.g.~see \cite{Huisken1984}, it is known that the geometric quantities satisfy evolution equations along the flow. The algorithm here is based on parabolic partial differential equations for the mean curvature vector $\vecH$ and the projection $\pi$, derived in the following result.

\begin{lem}
	\label{lemma:evolution equations}
	For a regular $\dimGa$-dimensional submanifold $\Ga[X] \subset \R^\dimR$ moving under mean curvature flow in codimension $\dimR - \dimGa$, the orthogonal projection $\pi$ and the mean curvature vector $\vecH$ satisfy
	\begin{subequations}
		\begin{align}
		\label{eq:evolution eq - pi}
		\mat \pi 
		&= \varDelta_{\Ga[X]} \pi 
		+ f_1(\pi), \\
		\label{eq:evolution eq - H}
		\mat \vecH 
		&= \varDelta_{\Ga[X]} \vecH + f_2(\pi,\vecH),
		\end{align} 
	\end{subequations}
	where the Laplace--Beltrami operator is understood componentwise. The non-linear terms are given componentwise, for $\alpha,\beta=1,\dotsc,\dimR$, by
	\begin{equation}
	\label{eq:non-linear expressions - f_1 and f_2}
	\begin{aligned} 
	f_1(\pi)_{\alpha \beta} &= 2 \sum\limits_{\mu=1}^{\dimR} \nabla_{\Ga[X]} \pi_{\alpha \mu} \cdot \nabla_{\Ga[X]} \pi_{\beta \mu}
	- 4 \sum\limits_{\mu,\kappa=1}^{\dimR} \pi_{\mu\kappa} \nabla_{\Ga[X]} \pi_{\alpha\mu} \cdot  \nabla_{\Ga[X]} \pi_{\beta\kappa}, \\ 
	f_2(\pi,H)_{\alpha} &= 2 \sum\limits_{\mu=1}^{\dimR} \nabla_{\Ga[X]} \pi_{\alpha\mu} \cdot \nabla_{\Ga[X]} H_{\mu}
	+ 4 \sum\limits_{\mu,\kappa=1}^{\dimR} \nabla_{\Ga[X]} \pi_{\alpha\mu} \cdot \nabla_{\Ga[X]} \pi_{\mu\kappa} H_\kappa.
	\end{aligned}
	\end{equation}
\end{lem}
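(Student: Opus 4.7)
The proof is a direct computation in local coordinates. In a local chart $(p^1,\dots,p^\dimGa)$ on $\Ga^0$ I view $\pi$ and $\vecH$ as $\R^{\dimR\times\dimR}$- and $\R^\dimR$-valued functions of $(p,t)$. I would compute $\mat \pi$ and $\Delta_{\Ga[X]}\pi$ (and likewise for $\vecH$) separately, expressed in terms of the immersion $X$, the metric $g_{ij}$, the Christoffel symbols $\Ga_{ij}^{k}$ and the second fundamental form $A_{ij}$, and then subtract and rewrite the remainder in terms of $\pi$, $\vecH$ and their tangential gradients.

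The first ingredient is the standard set of variation formulas along the flow. Since $\mat X=\vecH$ and $\vecH$ is normal (i.e.\ $\langle\vecH,\partial_i X\rangle=0$), Leibniz gives $\mat g_{ij}=-2\langle\vecH,A_{ij}\rangle$ and therefore $\mat g^{ij}=2 g^{ik}g^{jl}\langle\vecH,A_{kl}\rangle$. Differentiating $\pi_{\alpha\beta}=g^{ij}(\partial_i X)_\alpha(\partial_j X)_\beta$ in time and using $\mat\partial_i X=\partial_i\vecH$ yields an explicit expression for $\mat\pi$ which is bilinear in $\{\partial X,\partial\vecH,A,\vecH\}$. The analogous differentiation of $\vecH=g^{ij}\partial_i\partial_j X-g^{ij}\Ga^k_{ij}\partial_k X$ gives $\mat\vecH$ in the same type of form.

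The second ingredient is the coordinate Laplace--Beltrami operator $\Delta_{\Ga[X]} u = g^{kl}(\partial_k\partial_l u-\Ga_{kl}^{r}\partial_r u)$ applied componentwise to $\pi_{\alpha\beta}$ and $H_\alpha$, with iterated derivatives of $X$ expanded by the Gauss formula $\partial_k\partial_l X=\Ga_{kl}^{r}\partial_r X+A_{kl}$. After subtracting $\mat(\cdot)$ from $\Delta_{\Ga[X]}(\cdot)$, a large number of terms cancel and one is left with a remainder that is bilinear in $A$ and $\nabla A$ on the one hand, and $\vecH$, $\nabla_{\Ga[X]}\vecH$ on the other. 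The decisive step is to absorb all $A$- and $\nabla A$-terms into tangential derivatives of $\pi$. The key identity is obtained by tangentially differentiating the very definition of $\pi$: using $\nabla g=0$ and the Gauss formula, $\nabla_{\Ga[X]}\pi_{\alpha\beta}$ reduces to a symmetric sum of contractions of the form $(A_{ij})_\alpha(\partial_k X)_\beta + (\partial_i X)_\alpha(A_{jk})_\beta$, so that the second fundamental form is completely encoded in $\nabla_{\Ga[X]}\pi$. Feeding this back in, together with the projection identities $\pi^2=\pi$, $\pi^T=\pi$ and $\pi\,\vecH=0$ (the latter from normality of $\vecH$), assembles the remainder into precisely the combinations displayed in \eqref{eq:non-linear expressions - f_1 and f_2}.

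The main obstacle is purely combinatorial: each of the leftover $A\otimes A$-, $A\otimes\nabla\vecH$- and $\nabla A\otimes A$-type terms must be recognised as one of the two admissible patterns $\nabla_{\Ga[X]}\pi\cdot\nabla_{\Ga[X]}\pi$ or $\pi\,\nabla_{\Ga[X]}\pi\cdot\nabla_{\Ga[X]}\pi$ (and the analogous pairings producing $f_2$), and the precise signs and numerical factors $2$ and $-4$ in $f_1$, respectively $2$ and $4$ in $f_2$, have to emerge from summing all contributions. To keep the index bookkeeping tractable I would evaluate everything at an arbitrary but fixed point in a geodesic normal coordinate frame in which $\Ga_{ij}^{k}=0$ and $g_{ij}=\delta_{ij}$ at that point; this kills the $\mat g^{ij}$ and Christoffel-symbol terms pointwise and reduces the coefficient check to a short tensor computation, which can then be re-covariantised to obtain the claimed formulas on the whole of $\Ga[X]$.
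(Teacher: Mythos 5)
Your plan follows essentially the same route as the paper's Appendix~\ref{Appendix A}: evolution of the metric and its inverse, time derivatives of $\pi$ and $\vecH$, componentwise Laplacians computed in geodesic normal coordinates via the Gauss formula (where the divergence-of-$A$ terms are handled by the Codazzi equations, which in your direct coordinate computation arise as symmetry of third derivatives of $X$), and the key identity $\partial_k \pi = g^{ij} A_{ik}\otimes \partial_j X + g^{ij}\partial_i X \otimes A_{jk}$ used to recognise the quadratic remainder as the stated $f_1(\pi)$ and $f_2(\pi,\vecH)$. So the proposal is correct and essentially coincides with the paper's own proof strategy.
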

\begin{proof}
	The lemma is proved in the Appendix~\ref{Appendix A}, in order for the paper to avoid local definitions as much as possible.
	The result follows from Lemma~\ref{heat.equation.for.pi.v2} and Lemma~\ref{heat.equation.for.H.v2}, in a differential geometric setting using calculations in geodesic normal coordinates.
\end{proof}

%

These equations are formally the same as the analogous ones for mean curvature flow (the normal vector and mean curvature) see \cite{Huisken1984}, or \cite[equations (2.4) and (2.5)]{MCF}, the non-linear terms are however more complicated.

\subsection{A coupled system for mean curvature flow}

The evolution of a submanifold of dimension $\dimGa$ in codimension $\dimR-\dimGa \geq 1$ evolving by mean curvature flow is then governed by the coupled system \eqref{eq:velocity law}, \eqref{eq:evolution eq - pi}--\eqref{eq:evolution eq - H} together with the ODE \eqref{eq:velocity ODE}. 
The numerical method is based on the weak form of the above coupled system which reads:
\begin{equation}
\label{evolutioneqs-weak}
\begin{aligned}
v &= \vecH , \\
\int_{\Ga[X]} \!\!\! \mat \pi \cdot \vphi^\pi + \int_{\Ga[X]} \!\!\! \nb_{\Ga[X]} \pi \cdot \nb_{\Ga[X]} \vphi^\pi &=  \int_{\Ga[X]} \!\!\! f_1(\pi) \cdot \vphi^\pi , \\
\int_{\Ga[X]} \!\!\! \mat \vecH \cdot \vphi^{\vecH} + \int_{\Ga[X]} \!\!\! \nb_{\Ga[X]} \vecH \cdot \nb_{\Ga[X]} \vphi^{\vecH} 	&=  \int_{\Ga[X]} \!\!\! f_2(\pi,\vecH) \cdot \vphi^{\vecH}, \\[2mm]
\text{together with the ODE} \qquad 
\pa_t X	&= v \circ X ,
\end{aligned}
\end{equation}
for all test functions $\vphi^\pi \in H^1(\Ga[X])^{\dimR \times \dimR}$ and $\vphi^{\vecH} \in H^1(\Ga[X])^\dimR$.  
This system is complemented with the initial data for $X^0$, $\pi^0$ and $\vecH^0$.

For simplicity, by $\, \cdot \,$ we denote both the Euclidean scalar product for vectors, and the Frobenius inner product for matrices (i.e., the Euclidean product with an arbitrary vectorisation).

We directly compare now the weak formulation of the coupled geometric system for mean curvature flow in codimension $1$, derived in \cite{MCF}, see equation~(2.6): Find the velocity $v$, scalar mean curvature $H$, outward unit normal vector $\nu$, and the parametrisation $X$ such that the following system holds
\begin{equation}
\label{eq:MCF weak form}
\begin{aligned}
&	 \int_{\Ga[X]} \!\!\! \nabla_{\Ga[X]} v \cdot  \nabla_{\Ga[X]} \phiv +  \int_{\Ga[X]} \!\!\! v \cdot \phiv 
\\ 
& \hskip 2cm
=   -\!\int_{\Ga[X]} \!\!\! \nabla_{\Ga[X]}(H\n) \cdot \nabla_{\Ga[X]}\phiv  -\!\int_{\Ga[X]} \!\!\! H\n \cdot \phiv , \\
&	 \int_{\Ga[X]} \!\!\! \mat \n \cdot \phin + \int_{\Ga[X]} \!\!\! \nabla_{\Ga[X]} \n \cdot \nabla_{\Ga[X]} \phin =  \int_{\Ga[X]} \!\!\! | \nabla_{\Ga[X]} \n|^2\,   \n\, \cdot \phin ,
\\
&	 \int_{\Ga[X]} \!\!\! \mat H \, \phiH + \int_{\Ga[X]} \!\!\! \nabla_{\Ga[X]} H \cdot  \nabla_{\Ga[X]} \phiH =   \int_{\Ga[X]} \!\!\! | \nabla_{\Ga[X]} \n|^2 \,  H\, \phiH , \\[2mm]
&\ \text{together with the ODE} \qquad 
\pa_t X	= v \circ X ,
\end{aligned}
\end{equation}
for all test functions $\phiv \in H^1(\Ga[X])^3$ and $\phin \in H^1(\Ga[X])^3$, $\phiH \in H^1(\Ga[X])$. This system is complemented with the initial data $X^0$, $\n^0$ and $H^0$. 

It is also worthwhile to compare the size of the two formulations \eqref{evolutioneqs-weak} and \eqref{eq:MCF weak form} for a surface of codimension $1$ in $\R^n$: Without the ODE present in both cases, the weak formulation \eqref{eq:MCF weak form} is of size $2n + 1$, while the new weak system \eqref{evolutioneqs-weak} is of size $n^2 + n$ (the first equation is merely an identity).

We note that the first equation determining $v$ could be simplified to the natural pointwise identity $v=- H \nu$, see \cite{Willmore}. 

\section{Evolving surface finite element semi-discretization}
\label{section:ESFEM}


\subsection{Evolving surface finite elements}
We formulate the evolving surface finite element (ESFEM) discretization for the velocity law coupled with evolution equations on the evolving surface, following the description in \cite{KLLP2017,MCF}, which is based on \cite{Dziuk88,Demlow2009,highorderESFEM}. We use simplicial finite elements and continuous piecewise polynomial basis functions of degree~$k$, as defined in \cite[Section 2.5]{Demlow2009}.

We triangulate the given smooth initial surface $\Ga^0$ by an admissible family of triangulations $\mathcal{T}_h$ of decreasing maximal element diameter $h$; see \cite{DziukElliott_ESFEM} for the notion of an admissible triangulation, which includes quasi-uniformity and shape regularity. For a momentarily fixed $h$, we denote by $\bfx^0 $  the vector in $\R^{\dimR \dof}$ that collects all nodes $p_j$ $(j=1,\dots,\dof)$ of the initial triangulation. By piecewise polynomial interpolation of degree $k$, the nodal vector defines an approximate surface $\Ga_h^0$ that interpolates $\Ga^0$ in the nodes $p_j$. We will evolve the $j$th node in time, denoted $x_j(t)$ with $x_j(0)=p_j$, and collect the nodes at time $t$ in a column vector
$$
\bfx(t) \in \R^{\dof \dimR}. 
$$
We just write $\bfx$ for $\bfx(t)$ when the dependence on $t$ is not important.

By piecewise polynomial interpolation on the  plane reference triangle that corresponds to every
curved triangle of the triangulation, the nodal vector $\bfx$ defines a closed surface denoted by $\Ga_h[\bfx]$. We can then define globally continuous finite element {\it basis functions}
$$
\phi_i[\bfx] \colon \Ga_h[\bfx]\to\R, \qquad i=1,\dotsc,\dof,
$$
which have the property that on every triangle their pullback to the reference triangle is polynomial of degree $k$, and which satisfy at the nodes $\phi_i[\bfx](x_j) = \delta_{ij}$ for all $i,j = 1,  \dotsc, \dof .$
These functions span the finite element space on $\Ga_h[\bfx]$,
\begin{equation*}
S_h[\bfx] = S_h(\Ga_h[\bfx])=\spn\big\{ \phi_1[\bfx], \phi_2[\bfx], \dotsc, \phi_\dof[\bfx] \big\} .
\end{equation*}
For a finite element function $u_h\in S_h[\bfx]$, the tangential gradient $\nabla_{\Ga_h[\bfx]}u_h$ is defined piecewise on each element.

The discrete surface at time $t$ is parametrized by the initial discrete surface via the map $X_h(\cdot,t) \colon \Ga_h^0\to\Ga_h[\bfx(t)]$ defined by
$$
X_h(p_h,t) = \sum_{j=1}^\dof x_j(t) \, \phi_j[\bfx(0)](p_h), \qquad p_h \in \Ga_h^0,
$$
which has the properties that $X_h(p_j,t)=x_j(t)$ for $j=1,\dotsc,\dof$, that  $X_h(p_h,0) = p_h$ for all $p_h\in\Ga_h^0$, and
$$
\Ga_h[\bfx(t)]=\Ga[X_h(\cdot,t)] = \{ X_h(p_h,t) \,:\, p_h \in \Ga_h^0 \}. 
$$

The {\it discrete velocity} $v_h(x,t) \in \R^\dimR$ at a point $x=X_h(p_h,t) \in \Ga[X_h(\cdot,t)]$ is given by
$$
\partial_t X_h(p_h,t) = v_h(X_h(p_h,t),t).
$$
In view of the transport property of the basis functions  \cite{DziukElliott_ESFEM},
$
\tfrac\d{\d t} \big( \phi_j[\bfx(t)](X_h(p_h,t)) \big) =0 ,
$
the discrete velocity equals, for $x \in \Ga_h[\bfx(t)]$,
$$
v_h(x,t) = \sum_{j=1}^\dof v_j(t) \, \phi_j[\bfx(t)](x) \qquad \hbox{with } \ v_j(t)=\dot x_j(t),
$$
where the dot denotes the time derivative $\d/\d t$. 
Hence, the discrete velocity $v_h(\cdot,t)$ is in the finite element space $S_h[\bfx(t)]$, with nodal vector $\bfv(t)=\dot\bfx(t)$.
%
%

The {\it discrete material derivative} of a finite element function $u_h(x,t)$ with nodal values $u_j(t)$ is
$$
\mat_h u_h(x,t) = \frac{\d}{\d t} u_h(X_h(p_h,t)) = \sum_{j=1}^\dof \dot u_j(t)  \phi_j[\bfx(t)](x)  \quad\text{at}\quad x=X_h(p_h,t).
$$


\subsection{ESFEM spatial semi-discretizations}
\label{subsection:semi-discretization}

Now we will describe the semi-discretization of the coupled system for mean curvature flow in arbitrary codimension.

The finite element spatial semi-discretization of the weak coupled parabolic system \eqref{evolutioneqs-weak} reads as follows: Find the unknown nodal vector $\bfx(t)\in \R^{\dof \dimR}$ and the unknown finite element functions $v_h(\cdot,t)\in S_h[\bfx(t)]^\dimR$ and $\pi_h(\cdot,t)\in S_h[\bfx(t)]^{\dimR\times \dimR}$, and $\vecH_h(\cdot,t) \in S_h[\bfx(t)]^\dimR$ satisfying the coupled semi-discrete system:
\begin{subequations}
	\label{eq:semi-discretization}
	\begin{align}
	v_h &= \vecH_h , \\
	\int_{\Ga_h[\bfx]} \mat_h \pi_h \cdot \vphi^\pi_h
	+ \int_{\Ga_h[\bfx]} \nb_{\Ga_h[\bfx]} \pi_h \cdot \nb_{\Ga_h[\bfx]} \vphi^\pi_h 
	&=  \int_{\Ga_h[\bfx]} f_1(\pi_h) \cdot \vphi^\pi_h, \\
	\int_{\Ga_h[\bfx]} \mat_h \vecH_h \cdot \vphi^{\vecH}_h 
	+ \int_{\Ga_h[\bfx]} \nb_{\Ga_h[\bfx]} \vecH_h \cdot \nb_{\Ga_h[\bfx]} \vphi^{\vecH}_h 
	&= \int_{\Ga_h[\bfx]} f_2(\pi_h,\vecH_h) \cdot \vphi^{\vecH}_h ,
	\end{align}
\end{subequations}
where $f_1(\pi_h)$ and $f_2(\pi_h,\vecH_h)$ are the spatially discrete analogons of the non-linear expressions \eqref{eq:non-linear expressions - f_1 and f_2}, 
for all $\vphi^\pi_h \in S_h[\bfx(t)]^{\dimR\times \dimR}$ and $\vphi^{\vecH}_h \in S_h[\bfx(t)]^\dimR$, with the surface $\Ga_h[\bfx(t)]=\Ga[X_h(\cdot,t)] $ given by the differential equation
\begin{equation}
\label{eq:xh}
\partial_t X_h(p_h,t) = v_h(X_h(p_h,t),t), \qquad p_h\in\Ga_h^0.
\end{equation}
The initial values for the nodal vector $\bfx$ are taken as the positions of the nodes of the triangulation of the given initial surface $\Ga^0$.
The initial data $\pi_h^0$ and $\vecH_h^0$ are determined by componentwise Lagrange interpolation of $\pi^0$ and $\vecH^0$. 

\subsection{Matrix--vector formulation}
\label{section:matrix-vector form}

The nodal values of the unknown semi-discrete functions $v_h(\cdot,t) \in S_h[\bfx(t)]^{\dimR}$, $\pi_h(\cdot,t) \in S_h[\bfx(t)]^{\dimR\times \dimR}$, and $\vecH(\cdot,t) \in S_h[\bfx(t)]^{\dimR}$ are collected into column vectors  $\bfv\t = (v_j\t) \in \R^{\dof  \dimR}$, $\bfpi\t = (\pi_j\t) \in \R^{\dof  \dimR^2}$, and $\bfvecH\t = (\vecH_j\t)\in\R^{\dof  \dimR}$, respectively. We furthermore set
\begin{equation*}
\bfu = \begin{pmatrix} \bfpi \\ \bfvecH \end{pmatrix} \in \R^{\dof (\dimR^2+\dimR)} ,
\end{equation*}
and set $\bfI$ to be a block matrix extracting the $\bfvecH$ component of $\bfu$, that is $\bfI \bfu = \bfvecH$.

We define the surface-dependent mass matrix $\bfM(\bfx)$ and stiffness matrix $\bfA(\bfx)$:
\begin{align*}
\bfM(\bfx)|_{ij} = \int_{\Ga_h[\bfx]} \phi_i[\bfx] \phi_j[\bfx] \qand \bfA(\bfx)|_{ij} = \int_{\Ga_h[\bfx]} \nb_{\Ga_h[\bfx]} \phi_i[\bfx] \cdot \nb_{\Ga_h[\bfx]} \phi_j[\bfx] , 
\end{align*}
for $i,j = 1,  \dotsc,\dof$.
The non-linear terms $\bff(\bfx,\bfu) = \big(\bff_1(\bfx,\bfu) , \bff_2(\bfx,\bfu)\big)^T$ are defined by
\begin{align*}
\bff_1(\bfx,\bfu)|_{k+(\alpha-1)\dof+(\beta-1) \dimR \dof} 
= &\ \int_{\Ga_h[\bfx]} f_1(\pi_h)_{\alpha\beta} \ \phi_k[\bfx] , \\
\bff_2(\bfx,\bfu)|_{k+(\alpha-1)\dof} 
= &\ \int_{\Ga_h[\bfx]} f_2(\pi_h,\vecH_h)_{\alpha} \ \phi_k[\bfx] , \\
\end{align*}
for $k = 1, \dotsc, \dof$ and $\alpha,\beta=1,\dotsc,\dimR$. 

We further let, for $d \in \N$ (with the identity matrices $I_d \in \R^{d \times d}$) 
$$
\bfM^{[d]}(\bfx)= I_d \otimes \bfM(\bfx), \qquad
\bfA^{[d]}(\bfx)= I_d \otimes \bfA(\bfx) .
$$
When no confusion can arise, we will write $\bfM(\bfx)$ for $\bfM^{[d]}(\bfx)$, and $\bfA(\bfx)$ for $\bfA^{[d]}(\bfx)$.

Using these definitions \eqref{eq:semi-discretization} with \eqref{eq:xh} can be written in the matrix--vector form:
\begin{equation}
\label{eq:matrix-vector form}
\begin{aligned}
\bfv &= \bfI \bfu , \\
\bfM(\bfx) \dot{\bfu} + \bfA(\bfx) \bfu &= \bff(\bfx,\bfu), \\
\text{with} \qquad 
\dot \bfx &= \bfv .
\end{aligned} 
\end{equation}

The above matrix--vector formulation \eqref{eq:matrix-vector form} for mean curvature flow in arbitrary codimension is almost identical to the same formulas for mean curvature flow in codimension $1$ \cite[equation~(3.4)--(3.5)]{MCF}:
\begin{align*}
\big( \bfM(\bfx) + \bfA(\bfx) \big) \bfv &= \bfg(\bfx,\bfu), \\
\bfM(\bfx) \dot{\bfu} + \bfA(\bfx) \bfu &= \bff(\bfx,\bfu), \\
\text{with} \qquad 
\dot \bfx &= \bfv .
\end{align*} 
In the two above ODE systems the equations for $\bfu$ and $\bfx$ are formally the same, the equation for $\bfv$ is even simpler here.
Note that here $\bfu$ collects $\bfpi$ and $\bfvecH$, whereas for mean curvature flow in codimension $1$ it collects $\bfu = (\bfn,\bfH)^T$, the nodal values of the approximations to the normal vector and scalar mean curvature. Naturally, the block-size of the matrices in the two equations for $\bfu$ are greatly different (respectively, $\dimR^2 + \dimR$ and $\dimR + 1$).

It is crucial to notice that, thanks to the coinciding matrix--vector formulations many results from \cite{MCF}, most notably the stability results Proposition~7.1 and Proposition~10.1 therein, hold directly for the present case as well.

\subsection{Lifts}
\label{section:lifts}

As in \cite{KLLP2017} and \cite[Section~3.4]{MCF}, we compare functions on the {\it exact surface} $\Ga[X(\cdot,t)]$ with functions on the {\it discrete surface} $\Ga_h[\bfx(t)]$, via functions on the {\it interpolated surface} $\Ga_h[\xs(t)]$, where
$\xs(t)$ denotes the nodal vector collecting the grid points $x_j^*(t)=X(p_j,t)$ on the exact surface, where $p_j$ are the nodes of the discrete initial triangulation $\Ga_h^0$.

Any finite element function $w_h$ on the discrete surface, with nodal values $w_j$, is associated with a finite element function $\widehat w_h$ on the interpolated surface $\Ga_h[\xs]$ with the exact same nodal values. 
This can be further lifted to a function on the exact surface by using the \emph{lift operator} $\,^\ell$, mapping a function on the interpolated surface $\Ga_h[\xs]$ to a function on the exact surface $\Ga[X]$,
via the \emph{closest point projection}. Provided that the two surfaces are sufficiently close, for $x \in \Ga_h[\xs]$ find $x^\ell \in \Ga[X]$ such that $x^\ell - x$ is minimal, i.e.
\begin{equation*}
x^\ell - x \perp T_{x^\ell}\Ga[X] , \qquad \text{and then setting} \qquad \widehat w_h^\ell(x^\ell) = \widehat w_h(x) .
\end{equation*}
This definition is consistent with the lift operator in codimension $1$, see \cite{Dziuk88,DziukElliott_ESFEM,Demlow2009}, using the signed distance function $d$.
The standard norm-equivalence results (\cite[equation (2.15)--(2.17)]{Demlow2009}) hold for this definition as well.

Then the composed lift $\,^L$ maps finite element functions on the discrete surface $\Ga_h[\bfx]$ to functions on the exact surface $\Ga[X]$ via the interpolated surface $\Ga_h[\xs]$, and it is defined by 
$$
w_h^L = (\widehat w_h)^\ell.
$$

\section{Linearly implicit full discretization}
\label{section:BDF}

Similarly as for mean curvature flow \cite{MCF}, for the time discretization of the system of ordinary differential equations \eqref{eq:matrix-vector form} we use a $q$-step linearly implicit backward difference formula (BDF method). For a step size $\tau>0$, and with $t_n = n \tau \leq T$, we determine the approximations to all variables $\bfx^n$ to $\bfx(t_n)$, $\bfv^n$ to $\bfv(t_n)$, and $\bfu^n$ to $\bfu(t_n)$ by the fully discrete system of \emph{linear} equations
\begin{subequations}
	\label{eq:BDF}
	\begin{align}
	\bfv^n &= \bfI \, \bfu^n , \\
	\bfM(\widetilde \bfx^n) \dot \bfu^n + \bfA(\widetilde \bfx^n) \bfu^n &= \bff(\widetilde \bfx^n,\widetilde \bfu^n) ,  \\
	\dot \bfx^n &=  \bfv^n,
	\end{align}
\end{subequations}
where we denote the discretized time derivatives
\begin{equation}
\label{eq:backward differences def}
\dot \bfx^n = \frac{1}{\tau} \sum_{j=0}^q \delta_j \bfx^{n-j} , \qquad	\dot \bfu^n = \frac{1}{\tau} \sum_{j=0}^q \delta_j \bfu^{n-j} , \qquad n \geq q ,
\end{equation}
and where $\widetilde \bfx^n$ and $\widetilde \bfu^n$ are the extrapolated values 
\begin{equation}
\label{eq:extrapolation def}
\widetilde \bfx^n = \sum_{j=0}^{q-1} \gamma_j \bfx^{n-1-j} , \qquad	\widetilde \bfu^n = \sum_{j=0}^{q-1} \gamma_j \bfu^{n-1-j} , \qquad n \geq q .
\end{equation}
The starting values $\bfx^i$ and $\bfu^i$ ($i=0,\dotsc,q-1$) are assumed to be given; in addition we set $\widetilde \bfx^i = \bfx^i$ and $\widetilde \bfu^i = \bfu^i$ for $i=0,\dotsc,q-1$. They can be precomputed using either a lower order method with smaller step sizes or an implicit Runge--Kutta method.

The method is determined by its coefficients, given by $\delta(\zeta)=\sum_{j=0}^q \delta_j \zeta^j=\sum_{\ell=1}^q \frac{1}{\ell}(1-\zeta)^\ell$ and $\gamma(\zeta) = \sum_{j=0}^{q-1} \gamma_j \zeta^j = (1 - (1-\zeta)^q)/\zeta$. 
The classical BDF method is known to be zero-stable for $q\leq6$ and to have order $q$; see \cite[Chapter~V]{HairerWannerII}.
This order is retained, for $q \leq 5$, by the linearly implicit variant using the above coefficients $\gamma_j$; cf.~\cite{LubichMansourVenkataraman_bdsurf,AkrivisLiLubich_quasilinBDF}.

We again point out that the fully discrete system \eqref{eq:BDF}--\eqref{eq:approx} is formally the same as the fully discrete system for the mean curvature flow for surfaces \cite[equations (5.1)--(5.4)]{MCF}. 
Theorem~6.1 in \cite{MCF} proves optimal-order error bounds for the combined ESFEM--BDF full discretization
of the mean curvature flow system, for finite elements of polynomial degree $k \geq 2$ and BDF methods of order $2 \leq q \leq 5$.

We note that in the $n$th time step, the method decouples and hence only requires solving a linear system with the symmetric positive definite matrix $\delta_0 \bfM(\widetilde \bfx^n) + \tau\bfA(\widetilde \bfx^n)$.

From the vectors and matrices $\bfx^n =(x_j^n)$, $\bfv^n = (v_j^n)$, and $\bfu^n=(u_j^n)$ with $u_j^n=(\pi_j^n,\vecH_j^n)$, where $\pi_j^n \in\R^{\dimR\times \dimR}$ and $\vecH_j^n \in \R^\dimR$, we obtain position approximations to $X(\cdot,t_n)$,  $\Id_{\Ga[X(\cdot,t_n)]}$, velocity approximations to $v(\cdot,t_n)$, and approximations to the orthogonal projection and the mean curvature vector, respectively, at time $t_n$ as
\begin{equation}
\label{eq:approx}
\begin{aligned}
X_h^n(p_h) &= \sum_{j=1}^\dof x_j^n \, \phi_j[\bfx(0)](p_h) \quad\hbox{ for } p_h \in \Ga_h^0, \\   
x_h^n(x) & = \Id_{\Ga[X_h^n]}, \\
v_h^n(x)      &= \sum_{j=1}^\dof v_j^n \, \phi_j[\bfx^n](x)  \qquad\hbox{ for } x \in \Ga_h[\bfx^n], \\
\pi_h^n(x)      &= \sum_{j=1}^\dof \pi_j^n \, \phi_j[\bfx^n](x)  \qquad\hbox{ for } x \in \Ga_h[\bfx^n], \\
\vecH_h^n(x)      &= \sum_{j=1}^\dof \vecH_j^n \, \phi_j[\bfx^n](x)  \qquad\hbox{ for } x \in \Ga_h[\bfx^n].
\end{aligned}
\end{equation}
In the semi-discrete case, the approximations of the same quantities are given analogously.

\section{Main results: error estimates}
\label{section:main results}

We are now in the position to state the main results of this paper, time uniform optimal-order semi- and fully discrete $H^1$-norm error estimates for the position, velocity, orthogonal projection, and mean curvature vector obtained, respectively, by the semi-discretization \eqref{eq:semi-discretization} (or \eqref{eq:matrix-vector form}), or the linearly implicit BDF full discretization \eqref{eq:BDF}, using evolving surface finite elements of polynomial degree at least $2$, and $q$-step BDF method with $2 \leq q \leq 5$.

\subsection{Convergence of the semi-discretization}
\begin{thm}
	\label{theorem:semi-discrete convergence}
	Consider the semi-discretization \eqref{eq:semi-discretization} of the mean curvature flow \eqref{eq:velocity law} 
	in arbitrary codimension $\dimR-\dimGa$, using evolving surface finite elements of polynomial degree $k \geq 2$.
	Suppose that the mean curvature flow problem in arbitrary codimension admits an exact solution $(X,v,\pi,\vecH)$ that is sufficiently smooth on the time interval $t \in [0,T]$, and that the flow map $X(\cdot,t) \colon \Ga^0 \to \Ga(t) \subset \R^\dimR$ is non-degenerate so that $\Ga(t) = \Ga[X(\cdot,t)]$ is a regular surface on the time interval $t \in [0,T]$. 
	
	Then there exist constants $h_0 > 0$ and $C > 0$ such that 
	\begin{alignat*}{3}
	&\ \| x_h^L (\cdot,t) - \Id_{\Ga(t)} \|_{H^1(\Ga(t))} \leq C h^k, \qquad
	& &\ \| v_h^L (\cdot,t) - v(\cdot,t) \|_{H^1(\Ga(t))} \leq C h^k, \\
	&\ \| \pi_h^L (\cdot,t) - \pi(\cdot,t) \|_{H^1(\Ga(t))} \leq C h^k, \qquad
	& &\ \| \vecH_h^L (\cdot,t) - \vecH(\cdot,t) \|_{H^1(\Ga(t))} \leq C h^k , \\
	& \qquad \qquad\qquad\qquad\text{and}  & &\ \| X_h^\ell (\cdot,t) - X(\cdot,t) \|_{H^1(\Ga^0)} \leq C h^k ,
	\end{alignat*}
	for all $h \leq h_0$.
	The constant $C>0$ is independent of $h$ and $t$, but depends on the $H^{k+1}$-norms of the exact solution $(X,v,\pi,\vecH)$ and on the final time $T$. 
\end{thm}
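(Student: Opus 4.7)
The plan is to follow the error-splitting framework used for codimension~$1$ mean curvature flow in \cite{MCF}, exploiting the key observation already emphasised in Section~\ref{section:matrix-vector form}: the matrix--vector ODE system \eqref{eq:matrix-vector form} has the same structural form as its codimension~$1$ analogue, with only the block sizes and the concrete nonlinearities $\bff(\bfx,\bfu)$ changed. Consequently, the abstract stability results of \cite{MCF} (Propositions~7.1 and~10.1) apply verbatim once we verify their structural hypotheses for our $\bff$. What remains is therefore almost entirely a \emph{consistency analysis} tailored to the new evolution equations \eqref{eq:evolution eq - pi}--\eqref{eq:evolution eq - H}, plus a check of local Lipschitz continuity of $\bff$.

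First I would define the reference trajectory on the discrete side by interpolating the exact solution: let $\xs(t)$ collect the nodes $X(p_j,t)$ on the exact surface and let $\pi_h^\ast,\vecH_h^\ast,v_h^\ast$ be the finite-element functions on $\Ga_h[\xs(t)]$ whose nodal values agree with those of $\pi,\vecH,v$ at the interpolation nodes. Inserting the corresponding nodal vectors $(\xs,\vs,\us)$ into \eqref{eq:matrix-vector form} produces defects
\begin{equation*}
\bfd_{\bfv} = \bfI\,\us - \vs, \qquad \bfM(\xs)\dotus + \bfA(\xs)\us - \bff(\xs,\us) = \bfM(\xs)\,\bfd_{\bfu}, \qquad \dotxs - \vs = \bfzero,
\end{equation*}
where the first and last defects vanish by construction ($\bfI\bfu=\bfvecH$ holds pointwise for the exact solution, and $\dotxs = \vs$ follows from $\partial_t X = v\circ X$ at the nodes). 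The main consistency work is to show that the second (parabolic) defect satisfies
\begin{equation*}
\| \bfd_{\bfu} \|_{\bfM(\xs)} + \Bigl\| \int_0^t \bfd_{\bfu}(s)\,\d s \Bigr\|_{\bfA(\xs)} \leq C h^k ,
\end{equation*}
uniformly in $t$. This is obtained by lifting the discrete equation to $\Ga(t)$, subtracting the exact weak form \eqref{evolutioneqs-weak} for $\pi$ and $\vecH$, and estimating the resulting geometric, interpolation and quadrature errors. The relevant tools are the higher-order surface-FEM interpolation estimates and geometric error bounds of Demlow \cite{Demlow2009}, combined with the smoothness of $(\pi,\vecH)$ transported along the flow; componentwise application to the matrix-valued quantity $\pi$ and to the vector-valued $\vecH$ introduces no essential difficulties beyond bookkeeping over the indices $\alpha,\beta = 1,\dots,\dimR$.

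Next I would verify local Lipschitz continuity of $\bff = (\bff_1,\bff_2)$ in a tubular neighbourhood of the reference trajectory. Inspecting \eqref{eq:non-linear expressions - f_1 and f_2}, both $f_1(\pi)$ and $f_2(\pi,\vecH)$ are polynomial in $\pi$ and $\vecH$ and bilinear in their tangential gradients; the standard $W^{1,\infty}$ bounds on $\pi_h$ and $\vecH_h$ provided by an inverse estimate together with the induction hypothesis that $\|\eu\|_{\bfA(\xs)}+\|\eu\|_{\bfM(\xs)} \leq h^{\kappa}$ for some $\kappa \in (0, k-\dimGa/2)$ keep the discrete solution in a small $W^{1,\infty}$-ball around $(\pi_h^\ast,\vecH_h^\ast)$. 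On this ball the nonlinearities are Lipschitz uniformly in $h$ and $t$, which is exactly the structural hypothesis required by the abstract stability argument of \cite[Prop.~7.1]{MCF}. Applying that proposition with our defect bounds yields
\begin{equation*}
\|\bfe_\bfx\|_{\bfA(\xs)} + \|\bfe_\bfv\|_{\bfM(\xs)} + \|\bfe_\bfu\|_{\bfM(\xs)} + \Bigl\| \int_0^t \bfe_\bfu\,\d s \Bigr\|_{\bfA(\xs)} \leq C h^k,
\end{equation*}
uniformly on $[0,T]$. The induction-based bootstrap that closes the $W^{1,\infty}$ smallness condition is identical to the one in \cite{MCF}, since it only uses the matrix--vector structure.

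The main obstacle I anticipate is the consistency analysis for \eqref{eq:evolution eq - pi}: because $\pi$ takes values in matrices and the nonlinear term $f_1(\pi)$ contains products of tangential gradients of $\pi$, the defect in the projection equation couples to geometric errors in the tangent plane through the surface-dependent forms $\int_{\Ga_h[\bfx]} \nabla_{\Ga_h[\bfx]} \pi_h \cdot \nabla_{\Ga_h[\bfx]} \varphi_h^\pi$. Handling this cleanly requires a careful componentwise use of the geometric perturbation lemmas of \cite{DziukElliott_ESFEM,Demlow2009,highorderESFEM} and controlling all $\dimR^2$ matrix entries simultaneously in the $\bfM(\xs)$- and $\bfA(\xs)$-norms. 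Once this consistency bound is established, combining it with the stability proposition and translating the nodal-vector estimate back to $H^1$-norms via the lift (Section~\ref{section:lifts}) and norm-equivalence of \cite[eq.~(2.15)--(2.17)]{Demlow2009} yields all five estimates in the theorem, including $\|X_h^\ell(\cdot,t)-X(\cdot,t)\|_{H^1(\Ga^0)}$ by integrating the velocity error in time.
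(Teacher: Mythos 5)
Your proposal is correct and follows essentially the same route as the paper: the paper's proof likewise reduces everything to the codimension-one analysis of \cite{MCF} via the (almost) identical matrix--vector formulation, using the local Lipschitz continuity of the new nonlinear terms for stability (Proposition~7.1 there, with the velocity identity $\bfv=\bfI\bfu$ simplifying part (B)), the techniques of \cite[Lemma~8.1]{MCF} for consistency, and the combination of the two as in \cite[Section~9]{MCF}. Your write-up simply spells out these steps (defect definition, Lipschitz check, lift and norm equivalence) in more detail than the paper does.
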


\begin{proof}
	The result essentially follows from the proof of Theorem~4.1 in \cite{MCF}. 
	
	The stability is shown following the proof of Proposition~7.1 in \cite{MCF}, since (as we have pointed out above) the matrix--vector formulation \eqref{eq:matrix-vector form} is (almost) identical to the matrix--vector formulation of \cite[equation~(3.4)--(3.5)]{MCF} (recalling that here $\bfu =(\bfpi,\bfvecH)^T$ is in the role of $\bfu=(\bfn,\bfH)^T$ in \cite{MCF}). The system uses the same mass and stiffness matrices (but of different size), while the proof therein only uses the local Lipschitz continuity of the non-linear terms, which holds here as well. The bounded operator $\bfI$ in the velocity equation $\bfv = \bfI \bfu$ even simplifies part (B) of the stability proof of Proposition~7.1 in \cite{MCF}.
	
	The consistency errors for $(X,v,\pi,\vecH)$ are shown by the exact techniques of the consistency analysis \cite[Lemma~8.1]{MCF}.
	
	The uniform-in-time $H^1$-norm error bounds are proved by combining stability and consistency, verbatim as in \cite[Section~9]{MCF}. 
\end{proof}

\subsection{Convergence of the full discretization}

\begin{thm}
	\label{theorem:fully discrete convergence}
	Consider the full discretization \eqref{eq:BDF} of the mean curvature flow \eqref{eq:velocity law} in arbitrary codimension $\dimR-\dimGa$, using evolving surface finite elements of polynomial degree $k \geq 2$ and linearly implicit BDF time discretization of order $q$ with $2 \leq q \leq 5$. 
	Suppose that the mean curvature flow problem in arbitrary codimension admits an exact solution $(X,v,\pi, \vecH)$ that is sufficiently smooth on the time interval $t \in [0,T]$, and that the flow map $X(\cdot,t) \colon \Ga^0 \to \Ga(t) \subset \R^\dimR$ is non-degenerate so that $\Ga(t) = \Ga[X(\cdot,t)]$ is a regular surface on the time interval $t \in [0,T]$.
	
	Then, there exist constants $h_0 > 0$, $\tau_0 > 0$ and $C_0 > 0$ such that for all mesh sizes
	$h \leq h_0$ and time step sizes $\tau \leq \tau_0$ satisfying the mild step size restriction
	\begin{equation}
	\tau \leq C_0 h 
	\end{equation}
	(where $C_0 > 0$ can be chosen arbitrarily), the following error bounds for the
	lifts of the discrete position, velocity, tangential projection and mean curvature vector hold
	over the exact surface: provided that the starting values are $\mathcal{O}(h^k + \tau^{q+1/2})$)
	accurate in the $H^1$ norm at time $t_i$ for  $i=0,\dotsc,q-1$, we have at time
	$t_n = n\tau \leq T$
	\begin{align*}
	\| (x_h^n)^L - \Id_{\Ga(t_n)} \|_{H^1(\Ga(t_n))} \leq &\ C (h^k+\tau^q), \\
	\| (v_h^n)^L - v(\cdot,t_n) \|_{H^1(\Ga(t_n))} \leq &\ C (h^k+\tau^q), \\
	\| (\pi_h^n)^L - \pi(\cdot,t_n) \|_{H^1(\Ga(t_n))} \leq &\ C (h^k+\tau^q), \\
	\| (\vecH_h^n)^L - \vecH(\cdot,t_n) \|_{H^1(\Ga(t_n))} \leq &\ C (h^k+\tau^q) , \\
	\text{and} \qquad \| (X_h^n)^\ell - X(\cdot,t_n) \|_{H^1(\Ga^0)} \leq &\ C (h^k+\tau^q) .
	\end{align*}
	for all $h \leq h_0$.
	The constant $C>0$ is independent of $h$, $\tau$ and $n$, but depends on bounds of higher-derivatives of the exact solution $(X,v,\pi,\vecH)$, on the final time $T$ and on $C_0$.
\end{thm}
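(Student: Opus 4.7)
The plan is to follow the blueprint already laid out by the authors: reduce the theorem to the stability and consistency results proved in \cite{MCF}, using that the matrix--vector formulation \eqref{eq:BDF} is formally identical to the BDF--ESFEM system analysed there. The only novel ingredients compared to the codimension-1 case are (i) the different block sizes in $\bfu$ (namely $\dimR^2+\dimR$ instead of $\dimR+1$), (ii) the explicit form of the non-linear terms $f_1,f_2$ from \eqref{eq:non-linear expressions - f_1 and f_2}, and (iii) the trivial algebraic velocity equation $\bfv=\bfI\bfu$ in place of an elliptic velocity equation. Points (i) and (iii) are structural and pose no difficulty; the real work will go into handling (ii) in the consistency analysis.

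First I would set up the error quantities $\bfe_\bfx^n=\bfx^n-\xs^n$, $\bfe_\bfu^n=\bfu^n-\uls^n$ and $\bfe_\bfv^n=\bfv^n-\vls^n$, where the starred vectors are the nodal vectors of the interpolations of the exact $X$, $(\pi,\vecH)$ and $v$ on the grid $\xs$. Inserting $\uls^n,\vls^n,\xs^n$ into \eqref{eq:BDF} produces defects $\bfd_\bfu^n,\bfd_\bfx^n$ (the defect in the first equation is zero because $v=\vecH$ holds pointwise on the exact surface and the interpolation of an identity is an identity). I would then replay verbatim the stability argument of Proposition~7.1 of \cite{MCF}, which relies only on: the energy identity for $\bfM(\bfx)$ and $\bfA(\bfx)$ with respect to variations of $\bfx$, the local Lipschitz continuity of $\bff$, the Nevanlinna--Odeh multiplier technique for BDF of orders $2\le q\le 5$, and an inductive argument establishing $W^{1,\infty}$-smallness of $\bfe_\bfx$ and $\bfe_\bfu$ under the step size restriction $\tau\le C_0 h$. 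The crucial observation is that the non-linear terms $f_1(\pi),f_2(\pi,\vecH)$ involve only first-order tangential derivatives and are polynomial, hence locally Lipschitz on bounded sets in $H^1\cap L^\infty$, which is exactly the structural hypothesis used in \cite{MCF}. The role of the bounded linear operator $\bfI$ trivialises part (B) of that proof.

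For consistency I would proceed as in Lemma~8.1 of \cite{MCF}: estimate the defects $\bfd_\bfu^n$ arising from inserting the Ritz-projected/interpolated exact solution into the BDF formula, separating spatial and temporal defects. The spatial defects are bounded by $Ch^k$ using the evolving surface Ritz-projection estimates of \cite{KLLP2017} together with \eqref{eq:evolution eq - pi}--\eqref{eq:evolution eq - H}, where the non-linear terms $f_1,f_2$ are treated by their polynomial structure in $\pi,\vecH$ and $\nb_{\Ga[X]}\pi,\nb_{\Ga[X]}\vecH$ (this is the step that differs most from \cite{MCF} and where I expect the main obstacle to lie, since the quartic term in $\pi$ inside $f_1$ and the cubic coupling in $f_2$ must be bilinearly interpolated and split). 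The temporal defects of the BDF$q$ scheme with extrapolation of order $q$ are of order $\tau^q$ by standard Taylor expansion, as in \cite{AkrivisLiLubich_quasilinBDF}.

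Finally I would combine stability and consistency: the error equations have the form of a perturbed BDF--ESFEM system with right-hand side of size $\bigo(h^k+\tau^q)$, so the stability estimate applied to these equations yields an $H^1$-norm bound of the same order for $\bfe_\bfu^n$ and $\bfe_\bfv^n$, and an integrated-in-time bound of the same order for $\bfe_\bfx^n$. Lifting via $\,^L$ and $\,^\ell$ and using the norm equivalence of \cite{Demlow2009} for surfaces of codimension $\ge 1$ (as reviewed in Section~\ref{section:lifts}) gives the stated $H^1$-norm bounds on the exact surface $\Ga(t_n)$ and on $\Ga^0$. The induction on $n$ that closes the bootstrap on the $W^{1,\infty}$ smallness of the errors requires exactly $\tau\le C_0 h$ together with $k\ge 2$, which is why these assumptions appear in the statement.
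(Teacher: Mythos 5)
Your proposal is correct and follows essentially the same route as the paper, whose proof consists precisely of observing that the fully discrete system \eqref{eq:BDF} is formally identical to the BDF--ESFEM system of \cite{MCF} (same mass and stiffness matrices, locally Lipschitz non-linearities, the algebraic velocity equation only simplifying matters) and invoking the stability--consistency--lifting argument of Theorem~6.1 there; you merely flesh out those steps. The one nominal correction: for the fully discrete case the relevant stability result is Proposition~10.1 of \cite{MCF} (the BDF analogue, via the Nevanlinna--Odeh multiplier technique you describe) rather than Proposition~7.1, which the paper itself notes carries over verbatim.
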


\begin{proof}
	Similarly to the semi-discrete error bounds: Since the ESFEM / linearly implicit BDF discretization \eqref{eq:BDF} is (almost) identical to equation~(5.1) in \cite{MCF}, the proof of this result directly follows as the proof of Theorem~6.1 in \cite{MCF}.
\end{proof}

\begin{rem}
	\label{remark:dimension limit}
	The stability and convergence results readily extend to higher dimensional submanifolds of dimension $\dimGa \geq 4$ (of arbitrary codimension $\dimR - \dimGa$), cf.~Section~14 of \cite{MCF}, provided that optimal-order quasi-interpolation is used instead of the nodal interpolation (cf.~\cite[Lemma~4.3]{DziukElliott_acta}), and requires evolving surface finite elements of degree $k \geq \lfloor\dimGa / 2 \rfloor + 1$ and BDF methods of order $\lfloor\dimGa / 2 \rfloor + 1 \leq q \leq 5$.
	For the six-step BDF method a new multiplier-based energy technique was developed in \cite{Akrivisetal_BDF6}. The fully discrete stability proof in \cite{MCF} should generalise to this approach.
\end{rem}

\section{Numerical examples for curves
	in $\R^3$}
\label{section:numerics}

We performed the following numerical experiments for mean curvature flow of curves
in $\R^3$:
\begin{itemize}
	\item[-] A convergence test using planar curves where the exact solution is known.
	\item[-] A comparison test with Dziuk's algorithm for curves \cite{DeckelnickDziukElliott_acta} using circles and Angenent ovals \cite{Angenent_ovals}.
	\item[-] Experiments for space curves using established examples from the literature \cite{BGN_curve_grad_flow,Pozzi_CSF_codim}, e.g.~a trefoil knot, still comparing with Dziuk's algorithm.
\end{itemize}

All our numerical experiments were carried out in Matlab, using quadratic evolving surface finite elements, and BDF methods of various order specified in the experiments. 
The parametrisation of the quadratic elements was inspired by \cite{BCH2006}. 
The initial meshes were all generated using an arc-length parametrisation, without taking advantage of any symmetry of the surface.

\subsection{Convergence test}

We are reporting on the errors of our algorithm for mean curvature flow in codimension $2$ for \emph{flat space curves}. 
Simple test examples are constructed in this setting, by using the fact that the evolution of flat space curves evolving under the flow \eqref{eq:velocity law} is equivalent to their evolution under \emph{curve shortening flow}.

Let the curve $\Ga^0:[0,2\pi] \to \R^3$ be a circle of initial radius $R_0$ in an arbitrary plane.

We consider the mean curvature flow of $\Ga(\cdot,t)$ with initial value $\Ga_0$. 
Using the rotational symmetry of $\Ga$ along flow, we obtain that its radius satisfies the ODE:
\begin{equation}
\label{eq:CSF - radius ODE}
\diff R(t) = - \frac{1}{R(t)} , \qquad \text{with initial value} \quad R(0) = R_0 .
\end{equation}
The above initial value problems has the solution $R(t) = \sqrt{R_0^2 - 2 t}$ until final time  $T_{\max} = R_0^2 / 2$. Therefore, the curvature of $\Ga(\cdot,t)$ is given by $H(\cdot,t) = 1 / R(t) = (R_0^2 - 2 t)^{-1/2}$.

We computed numerical approximations to the flow using quadratic finite elements ($k=2$) and using the $2$-step linearly implicit BDF method ($q=2$) for a circle of radius $R_0=1$ which lies in the $y$-$z$-plane rotated by $\theta = \pi/e$. The starting values $\bfx^i \in \R^{3N}$ and $\bfu = (\bfpi , \bfH)^T \in \R^{(9+3)N}$ for $i=1,\dotsc,q-1$ were computed as the interpolations of the exact values.

In Figure~\ref{fig:circle space conv} and \ref{fig:circle time conv} we report on the errors between the numerical and (interpolation of) exact solutions for mean curvature flow in codimension $2$ of a flat circle until the final time $T_{\max}$, illustrating the error bounds of Theorem~\ref{theorem:semi-discrete convergence} and \ref{theorem:fully discrete convergence}.
The two plots in Figure~\ref{fig:circle space conv} report on the surface error and the errors of the mean curvature $\vecH_h$, respectively on the left- and right-hand side. 
The logarithmic error plots show the $L^\infty(H^1)$ norm errors against the mesh size $h$. The lines marked with different symbols correspond to different time step sizes $\tau$.
Figure~\ref{fig:circle time conv} reports on the same errors but reversing roles of $h$ and $\tau$.

In both cases the error curves match the slope of the reference lines (dashed) corresponding to the convergence order of Theorem~\ref{theorem:semi-discrete convergence} and \ref{theorem:fully discrete convergence}, $\mathcal{O}(h^2)$ and $\mathcal{O}(\tau^2)$

\begin{figure}[htbp]
	\includegraphics[width=\textwidth]{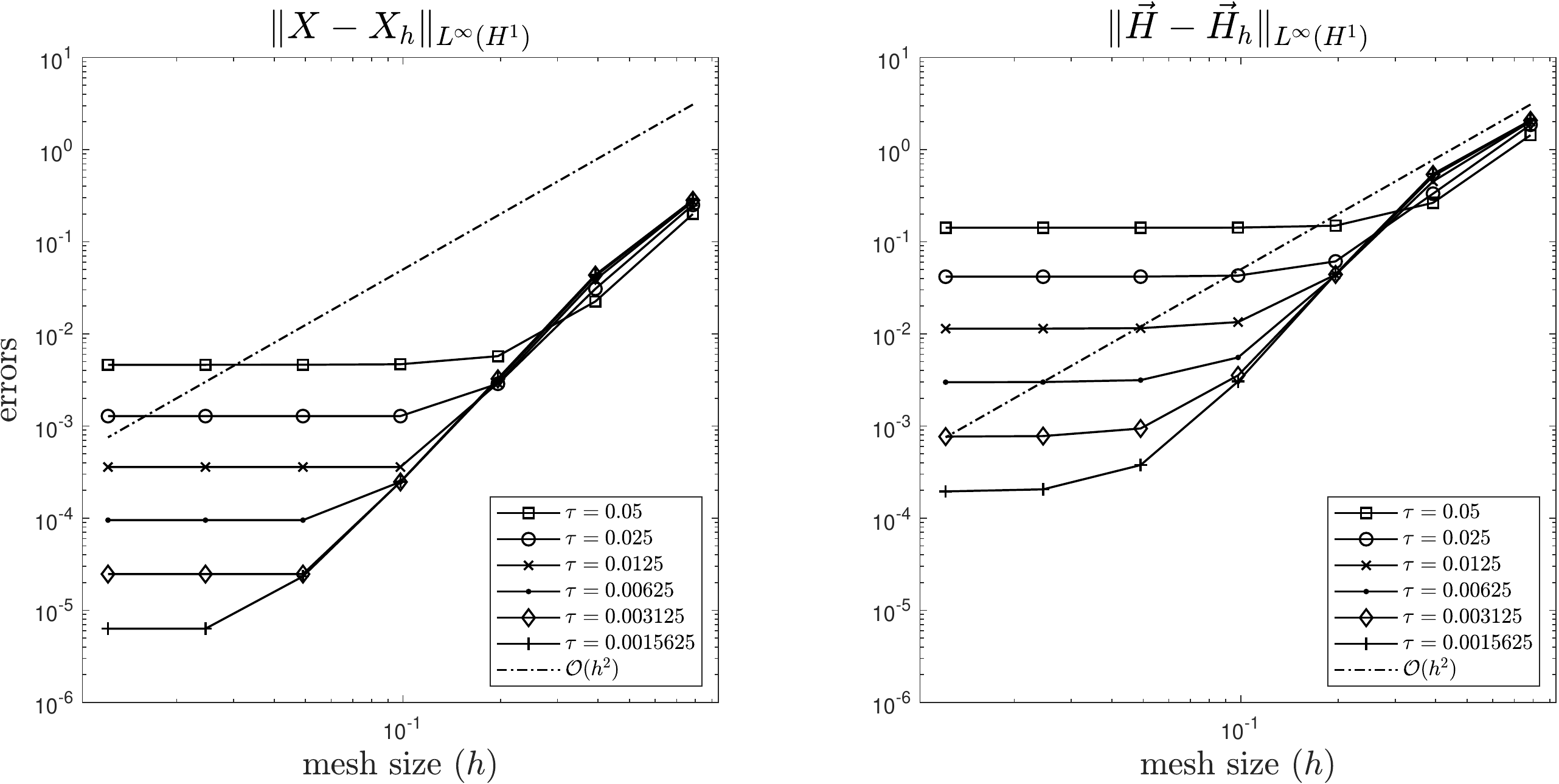}
	\caption{Spatial convergence of the BDF2 / quadratic ESFEM discretization for MCF codimension 2 of the unit circle for $T = 0.4$.}
	\label{fig:circle space conv}
\end{figure}

\begin{figure}[htbp]
	\includegraphics[width=\textwidth]{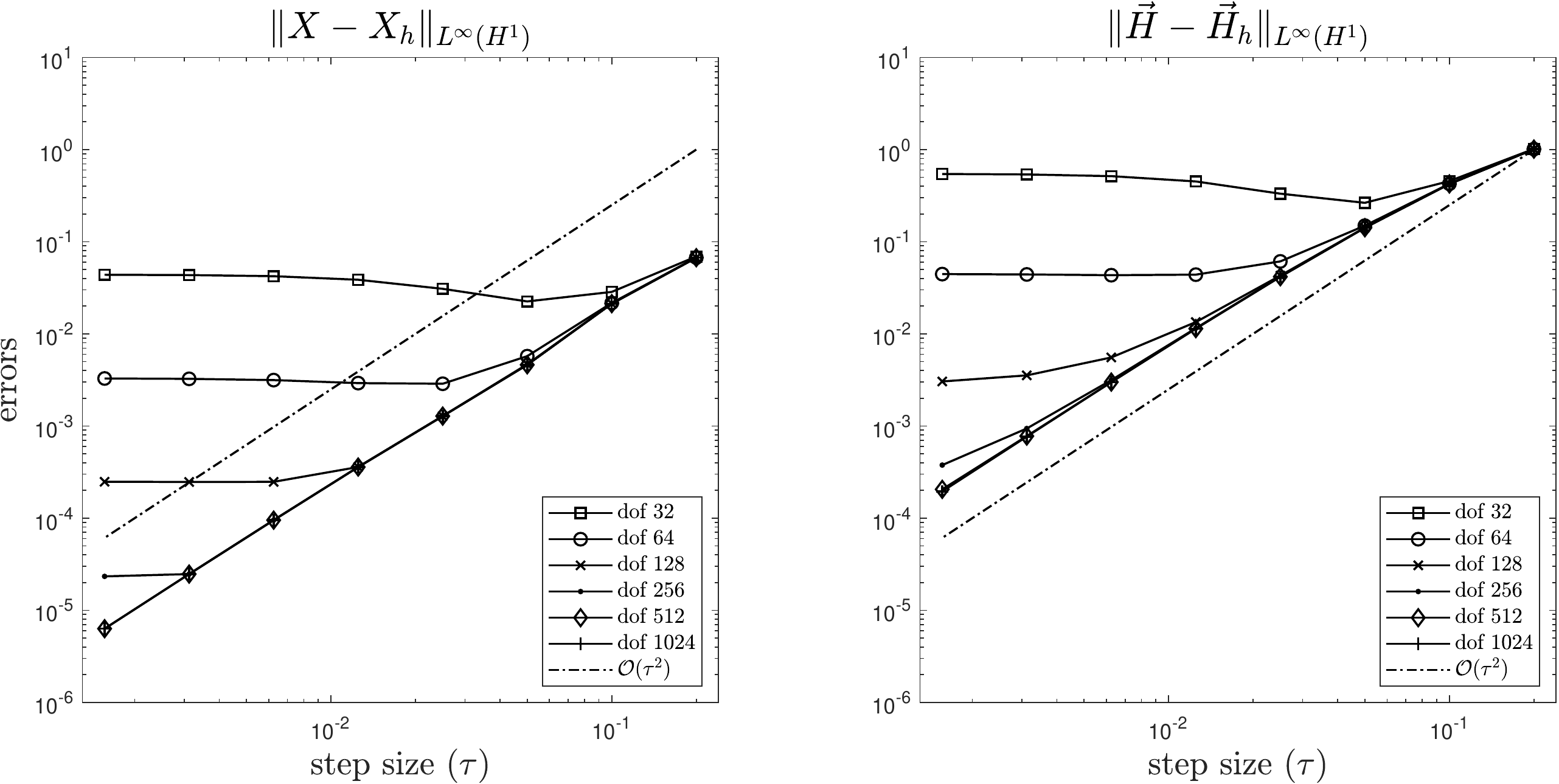}
	\caption{Temporal convergence of the BDF2 / quadratic ESFEM discretization for MCF codimension 2 of the unit circle for $T = 0.4$.}
	\label{fig:circle time conv}
\end{figure}

\subsection{Comparison with Dziuk's algorithm}

We compared the algorithm \eqref{eq:BDF} with (the linearly implicit BDF version of) Dziuk's algorithm for curves, see~\cite{Dziuk_CSF_1994,DeckelnickDziukElliott_acta}:
\begin{equation}
\label{eq:Dziuk's alg - matrix-vector form}
\bfM(\widetilde \bfx^n) \dot \bfx^n + \bfA(\widetilde \bfx^n) \bfx^n = 0 , \qfor n \geq q ,
\end{equation}
with given initial data $\bfx^i \in \R^{3N}$ for $i=1,\dotsc,q-1$.

Figure~\ref{fig:compare circle} compares the exact solution (black), Dziuk's algorithm (grey), and our algorithm \eqref{eq:BDF} (light grey) for a flat circle of unit radius over the time interval $[0,0.4875]$, using a mesh with $128$ nodes and $\tau = 0.0125$.
\begin{figure}[htbp]
	\includegraphics[width=\textwidth]{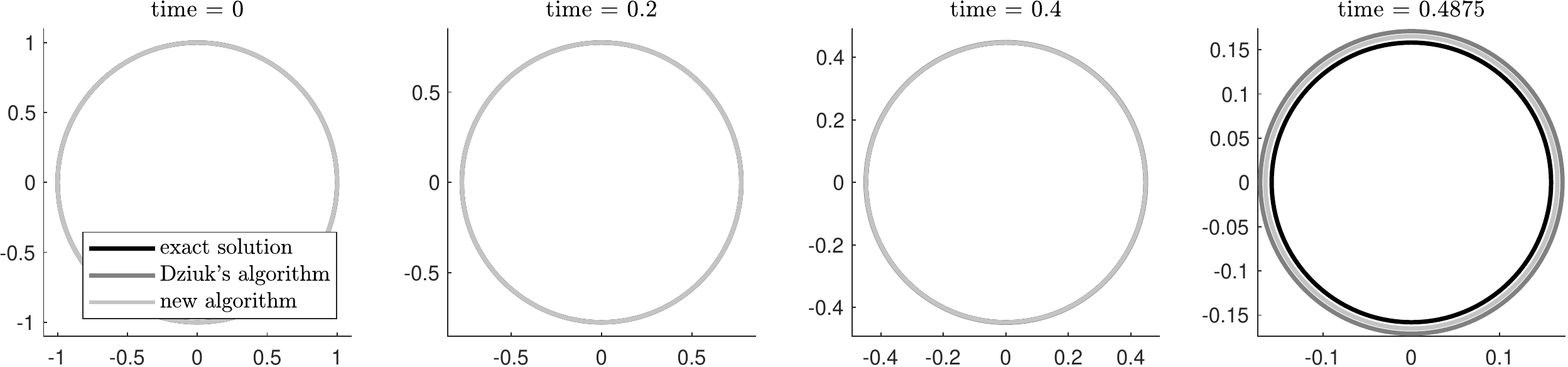}
	\caption{Comparing our algorithm (light grey) with the exact solution (black) and Dziuk's algorithm (grey) using a flat circle.}
	\label{fig:compare circle}
\end{figure}

Figure~\ref{fig:compare Angenent ovals} reports on the same comparison for Angenent ovals, defined, for $\theta \in [0,2\pi]$ and $t \in (-\infty,0)$, by
\begin{equation}
\label{eq:Angenent ovals}
\begin{aligned}
X(\theta,t) = &\ \Big( \int_0^r \cos(\varphi) \kappa(\varphi)^{-1} \d \varphi , \ \int_0^r \cos(\varphi) \kappa(\varphi)^{-1} \d \varphi , \ 0 \Big) , \\
\text{with} \qquad 
\kappa^2(\varphi,t) = &\ (e^{-2t}-1)^{-1} + \cos^2(\varphi) ,
\end{aligned}
\end{equation}
for more details we refer to \cite{Angenent_ovals}. Choosing $\Ga^0$ as the Angenent oval with $t_0 < 0$ via \eqref{eq:Angenent ovals}, a solution exists on the interval $[0,-t_0)$.

The experiment of Figure~\ref{fig:compare Angenent ovals} was performed on the time interval $[0,2]$ using the Angenent oval with $t_0 = -2$ as initial values $\Ga^0$, using a mesh with $128$ nodes and $\tau = 10^{-4}$.
\begin{figure}[htbp]
	\includegraphics[width=\textwidth]{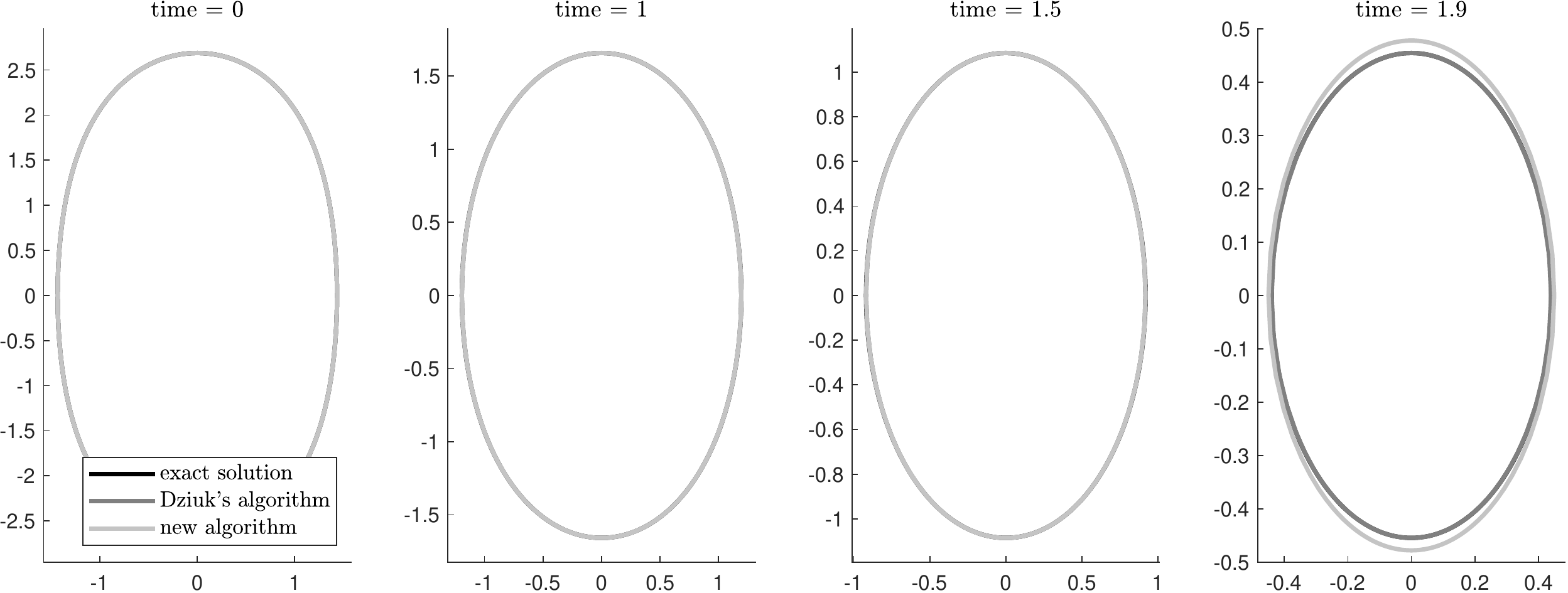}
	\caption{Comparing our algorithm (light grey) with the exact solution (black) and Dziuk's algorithm (grey) using (flat) Angenent ovals \eqref{eq:Angenent ovals}.}
	\label{fig:compare Angenent ovals}
\end{figure}

\subsection{Experiments for space curves}

We have performed various experiments for space curves as well, comparing our algorithm and Dziuk's.
In Figures~\ref{fig:trefoil_idem}--\ref{fig:snake} we report on the time evolution of a sinusoidal curve and a trefoil knot (which is eventually only immersed).

The numerical experiments in \cite{MCF,Willmore,MCF_generalised} have indicated that it is beneficial to conserve the geometric properties of the dynamic variables close to singularities, e.g.~for mean curvature flow projecting the extrapolated normal vector back to the unit sphere, cf.~\eqref{eq:extrapolation def}. 

According to our experiments the symmetry of $\pi_h$ is well preserved, however the idempotency $\pi_h^2 = \pi_h$ is deteriorating close to singularities.
Figure~\ref{fig:trefoil_idem} reports on an experiment where a (regularized) minimisation problem is solved (using Matlab's \texttt{fmincon}) in order to preserve idempotency, comparing it to the original algorithm. The regularisation step is performed only for those extrapolated projection matrices $\widetilde{\pi}_h^n$ \eqref{eq:extrapolation def} which are at least a tolerance away from being idempotent. That is a correction step, which is still locally Lipschitz, is only performed on the right-hand side of \eqref{eq:BDF}. (Finding an idempotent matrix close to $\widetilde{\pi}_h^n$ is a much harder problem then preserving unit length, cf.~\cite{MCF}, therefore this rudimentary process only yields a slight improvement.) 
In order to highlight this phenomena we used a coarse grid $\text{dof}=64$ and large step size $\tau = 0.01$ for Figure~\ref{fig:trefoil_idem}.
Such a geometric process is used for Figures~\ref{fig:trefoil} and \ref{fig:snake} as well.

\begin{figure}[htbp]
	\includegraphics[width=\textwidth,clip,trim={100 40 70 70}]
	{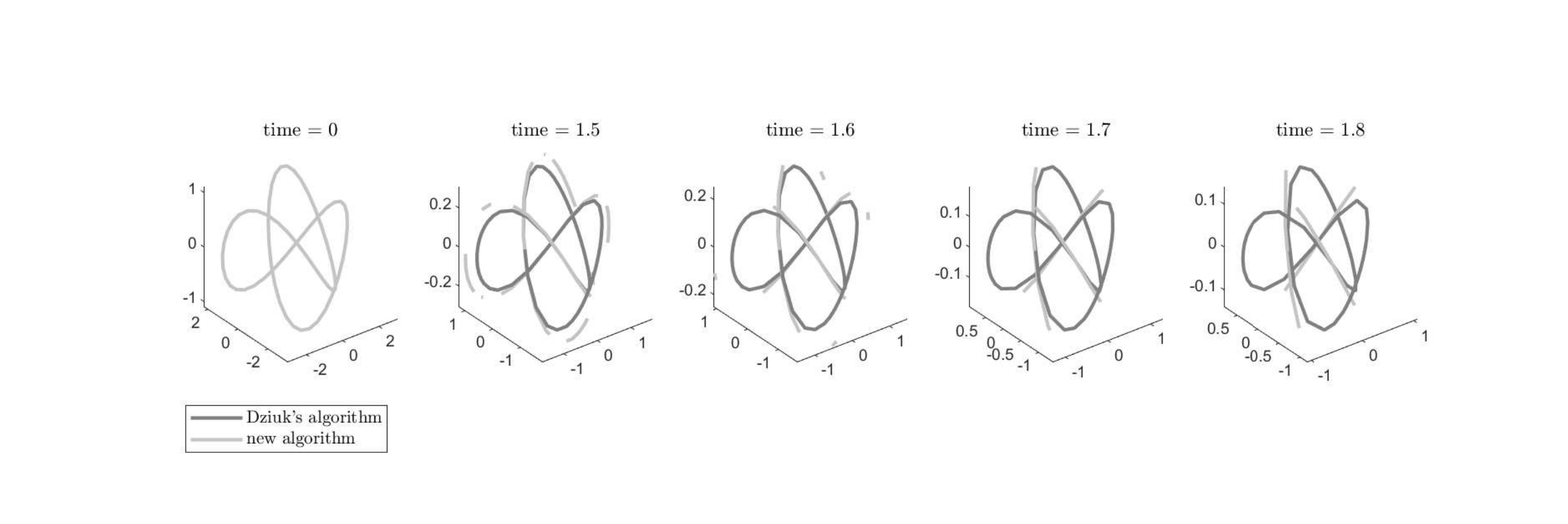}
	\includegraphics[width=\textwidth,clip,trim={100 40 70 70}]
	{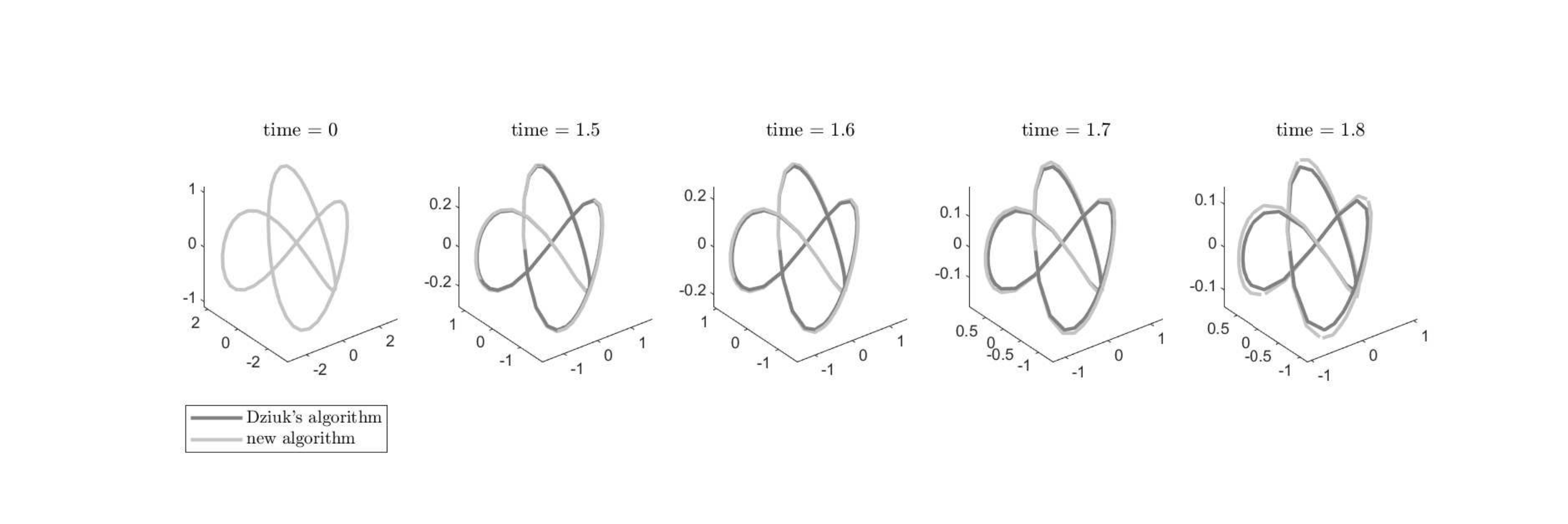}
	\caption{Comparing our algorithm (light grey) without and with idempotency correction (top and bottom), with \text{Dziuk's} algorithm (grey) using a trefoil knot as initial value. ($\text{dof}=64$ and $\tau = 0.01$)}
	\label{fig:trefoil_idem}
\end{figure}

\begin{figure}[htbp]
	\includegraphics[width=\textwidth,clip,trim={100 40 90 70}]
	{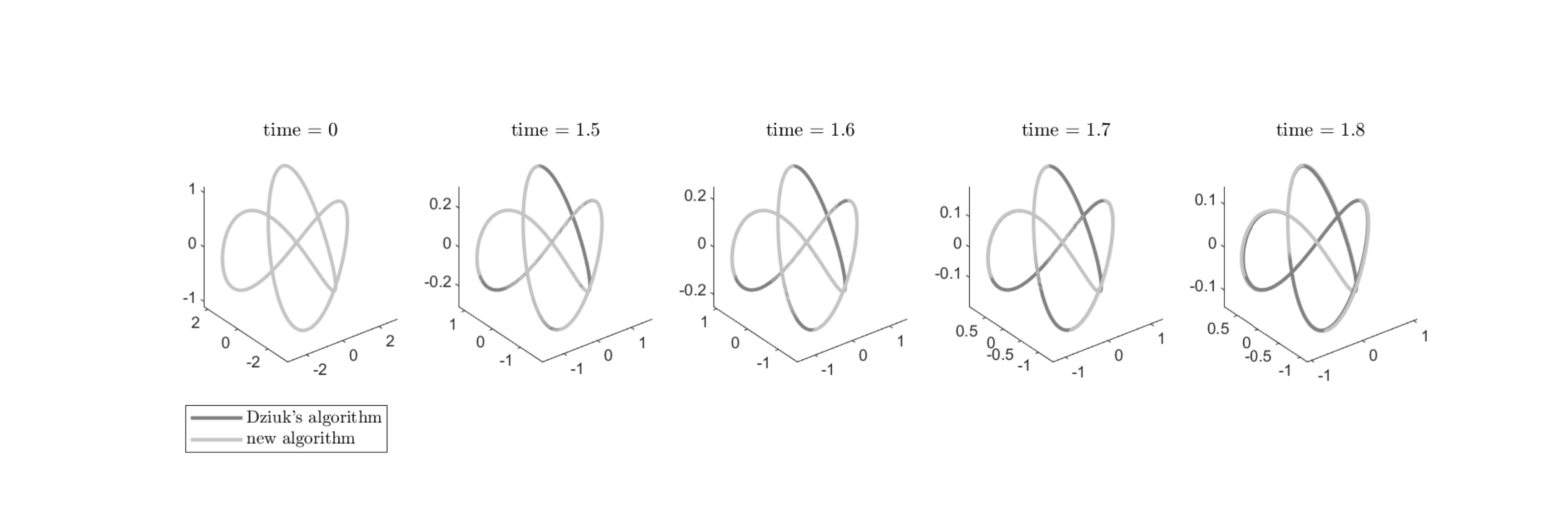}
	\caption{Comparing our algorithm (light grey) with \text{Dziuk's} algorithm (grey) using a trefoil knot as initial value. ($\text{dof}=512$ and $\tau = 10^{-4}$)}
	\label{fig:trefoil}
\end{figure}

\begin{figure}[htbp]
	\includegraphics[width=\textwidth,clip,trim={100 41 80 72}]
	{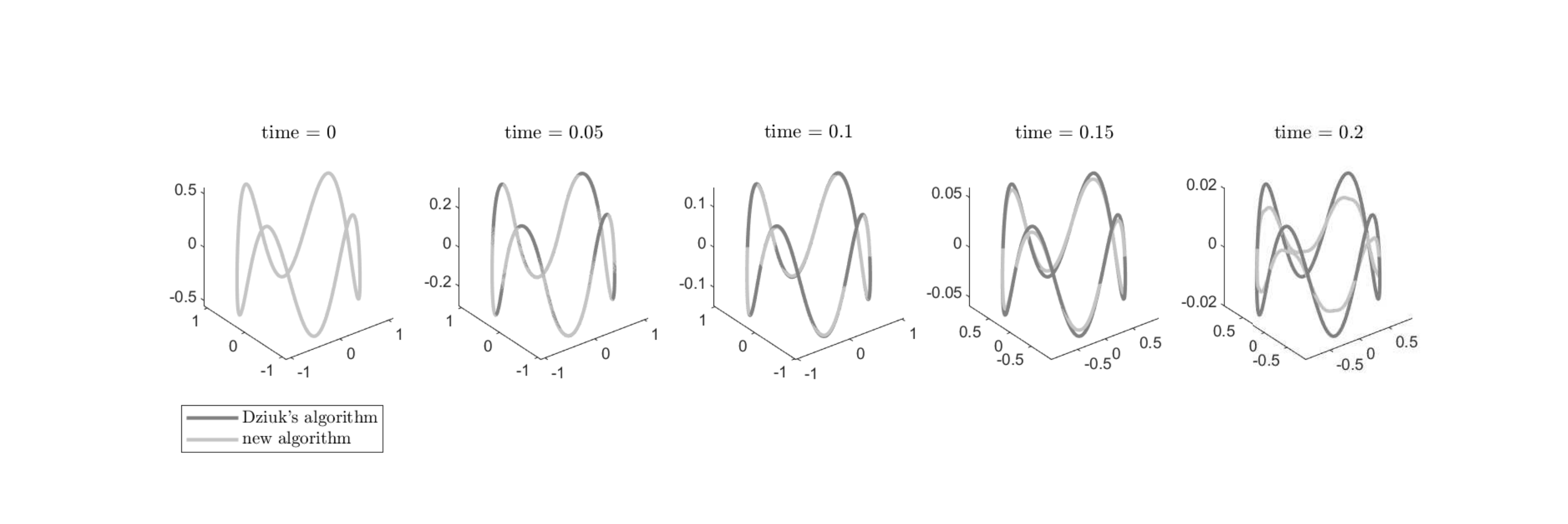}
	\caption{Comparing our algorithm (light grey) with \text{Dziuk's} algorithm (grey) using a sinusoidal initial curve. ($\text{dof}=512$ and $\tau = 10^{-4}$)}
	\label{fig:snake}
\end{figure}
For the evolution of the sinusoidal curve Figure~\ref{fig:snake}, we would like to highlight the short time scale, and the rapid shrinking in the $z$ coordinate.

%
%
%
%
%
%
%

\section*{Acknowledgments}

The authors wish to deeply thank Simon Brendle for bringing this topic to their attention, and also for his fundamental ideas, in particular, his contributions in deriving the evolution equations presented in the Appendix. 

The work of Bal\'azs Kov\'acs is funded by the Heisenberg Programme of the Deutsche Forschungsgemeinschaft (DFG, German Research Foundation) -- Project-ID 446431602.

\appendix
\section{Evolution equations for mean curvature flow in higher codimension}
\label{Appendix A}


Let $X \colon \Gamma^0 \times [0,T] \to \R^\dimR$ be a solution of the mean curvature flow, i.e.
\begin{equation*}
v = \vec{H} .
\end{equation*}
Let $g_{ij} 
= \partial_i X \cdot \partial_j X = \sum_{\mu} \partial_i X_{\mu} \partial_j X_{\mu}$ denote the induced metric, and $g^{ij}$ denote its inverse. 
Let 
\begin{equation*}
A_{ij} = (\partial_i \partial_j X)^{\bot} = \partial_i \partial_j X - \Ga_{ij}^k \partial_k X 
\end{equation*}
denote the second fundamental form
and 
\begin{equation*}
\vec{H} = g^{ij} A_{ij} = (g^{ij} \partial_i \partial_j X)^{\bot} = g^{ij} \partial_i \partial_j X - g^{ij} \Ga_{ij}^k \partial_k X 
\end{equation*}
the mean curvature vector. We shall view $\vec{H}$ as a function taking values in $\R^\dimR$. 
Further, let
\begin{equation*}
\pi = g^{ij} \partial_i X \otimes \partial_j X 
\end{equation*}
the orthogonal projection from $\R^\dimGa$ to the tangent space to the submanifold at the point $X(x,t)$. It can be seen as a function taking values in the space of $\dimR \times \dimR$-matrices.

\begin{lem}
	The evolution of the metric is given by 
	\[\frac{\partial}{\partial t} g_{ij} = -2 \, \vec{H} \cdot A_{ij}.\] 
	Moreover, the inverse metric satisfies
	\[\frac{\partial}{\partial t} g^{ij} = 2 \, g^{ik} \, g^{jl} \, \vec{H} \cdot A_{kl}.\] 
\end{lem}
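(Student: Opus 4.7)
The plan is a direct computation starting from the definition $g_{ij} = \partial_i X \cdot \partial_j X$ and using that the velocity is $v = \vecH$. First I would differentiate in time and exchange the partial derivatives (which is allowed because $X$ is smooth):
\begin{equation*}
\tfrac{\partial}{\partial t} g_{ij}
= \partial_i(\partial_t X) \cdot \partial_j X + \partial_i X \cdot \partial_j(\partial_t X)
= \partial_i \vecH \cdot \partial_j X + \partial_i X \cdot \partial_j \vecH.
\end{equation*}

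The next step is to rewrite each term using the fact that $\vecH$ is orthogonal to the tangent space, so $\vecH \cdot \partial_k X = 0$ for every $k$. Differentiating this identity tangentially gives $\partial_i \vecH \cdot \partial_j X = -\vecH \cdot \partial_i\partial_j X$, and analogously for the second summand. Since $\vecH$ is normal, $\vecH \cdot \partial_i\partial_j X = \vecH \cdot (\partial_i\partial_j X)^\bot = \vecH \cdot A_{ij}$. Combining these two observations yields the first formula $\tfrac{\partial}{\partial t} g_{ij} = -2\,\vecH\cdot A_{ij}$.

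For the inverse metric I would use the standard trick: differentiate the identity $g^{ij} g_{jk} = \delta^i_k$ to obtain $(\tfrac{\partial}{\partial t} g^{ij}) g_{jk} = -g^{ij}\tfrac{\partial}{\partial t} g_{jk}$, and then contract with $g^{k\ell}$ on the right to solve for $\tfrac{\partial}{\partial t} g^{i\ell}$; substituting the first formula produces $\tfrac{\partial}{\partial t} g^{ij} = 2\,g^{ik} g^{j\ell}\,\vecH\cdot A_{k\ell}$ immediately, after relabelling indices.

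No step here should be a serious obstacle; the only thing to be careful about is the sign convention in the second fundamental form $A_{ij} = (\partial_i\partial_j X)^\bot$ and making sure that passing between $\vecH \cdot \partial_i\partial_j X$ and $\vecH \cdot A_{ij}$ is correctly justified by the orthogonality $\vecH \perp \partial_k X$. Everything else is algebra, so the proof is short.
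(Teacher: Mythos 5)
Your proposal is correct and follows essentially the same route as the paper: differentiate $g_{ij}=\partial_i X\cdot\partial_j X$ in time using $\partial_t X=\vec H$, convert $\partial_i\vec H\cdot\partial_j X$ to $-\vec H\cdot\partial_i\partial_j X=-\vec H\cdot A_{ij}$ via the orthogonality $\vec H\perp\partial_k X$, and then use the standard identity $\tfrac{\partial}{\partial t}g^{ij}=-g^{ik}g^{jl}\tfrac{\partial}{\partial t}g_{kl}$ for the inverse metric. The only difference is that you spell out the justifications (differentiating $\vec H\cdot\partial_j X=0$ and identifying the normal part with $A_{ij}$) that the paper leaves implicit.
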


\begin{proof} 
	We compute 
	\[\frac{\partial}{\partial t} g_{ij} = \partial_i \vec{H} \cdot \partial_j X + \partial_i X \cdot \partial_j \vec{H} = -\vec{H} \cdot \partial_i \partial_j X - \partial_j \partial_i X \cdot \vec{H} = -2 \, \vec{H} \cdot A_{ij}.\] 
	This proves the first statement. Since 
	\[\frac{\partial}{\partial t} g^{ij} = -g^{ik} \, g^{jl} \, \frac{\partial}{\partial t} g_{kl},\] 
	the second statement follows. 
\end{proof}

\begin{lem}
	\label{time.derivative.of.pi}
	We have 
	\[\frac{\partial}{\partial t} \pi = g^{ij} \, \partial_i \vec{H} \otimes \partial_j X + g^{ij} \, \partial_i X \otimes \partial_j \vec{H} + 2 \, g^{ik} \, g^{jl} \, (\vec{H} \cdot A_{kl}) \, \partial_i X \otimes \partial_j X.\] 
\end{lem}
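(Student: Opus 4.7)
The plan is to compute $\partial_t \pi$ by a direct application of the product rule to the defining formula $\pi = g^{ij} \, \partial_i X \otimes \partial_j X$. Since three factors depend on $t$, the Leibniz rule produces three terms:
\begin{equation*}
\frac{\partial}{\partial t} \pi = \Big(\frac{\partial}{\partial t} g^{ij}\Big) \, \partial_i X \otimes \partial_j X + g^{ij} \, \Big(\frac{\partial}{\partial t} \partial_i X\Big) \otimes \partial_j X + g^{ij} \, \partial_i X \otimes \Big(\frac{\partial}{\partial t} \partial_j X\Big).
\end{equation*}
Einstein summation is assumed throughout, and the three terms are handled independently.

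For the middle and last terms I would use the mean curvature flow equation $\partial_t X = \vec{H}$ together with the fact that partial derivatives in space and time commute (since $X$ is assumed smooth), so that $\partial_t \partial_i X = \partial_i \partial_t X = \partial_i \vec{H}$, and likewise for the $j$-index. These substitutions yield the first two summands in the claimed identity verbatim.

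For the first term I would invoke the preceding lemma, which gives $\partial_t g^{ij} = 2 \, g^{ik} \, g^{jl} \, \vec{H} \cdot A_{kl}$. Substituting this in produces the third summand $2 \, g^{ik} \, g^{jl} \, (\vec{H} \cdot A_{kl}) \, \partial_i X \otimes \partial_j X$ exactly as stated, after an obvious relabeling of the dummy indices $i,j$ on $\partial_i X \otimes \partial_j X$ (since the $g^{ij}$ appearing in the definition of $\pi$ is differentiated, the surviving $i,j$ are those on the tangent vectors).

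There is no real obstacle here: this is a direct Leibniz-rule computation. The only points requiring some care are (i) consistent index bookkeeping when combining the three contributions and (ii) ensuring the exchange $\partial_t \partial_i = \partial_i \partial_t$ is justified by the smoothness hypothesis on $X$. Both are routine, and summing the three contributions gives the claimed formula.
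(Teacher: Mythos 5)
Your proof is correct and is exactly the paper's argument: the paper's proof of this lemma is precisely "differentiate the definition $\pi = g^{ij}\,\partial_i X \otimes \partial_j X$ by the product rule, use $\partial_t\partial_i X = \partial_i\vec{H}$ from the flow equation, and insert the evolution of $g^{ij}$ from the preceding lemma." No discrepancy; your remark about relabeling is not even needed, since the surviving indices already match the statement.
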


\begin{proof} 
	This follows from the definition of $\pi$ together with the evolution equation of the metric. 
\end{proof}

\begin{lem} 
	\label{first.derivative.of.pi}
	The component-wise derivatives of $\pi$ are given by 
	\[\partial_k \pi = g^{ij} \, A_{ik} \otimes \partial_j X + g^{ij} \, \partial_i X \otimes A_{jk}.\]  
\end{lem}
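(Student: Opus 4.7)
The plan is to differentiate the defining expression $\pi = g^{ij}\,\partial_i X \otimes \partial_j X$ directly, and to choose a coordinate system that eliminates the many terms which should not appear in the final answer. Specifically, I would work in geodesic normal coordinates centered at an arbitrary point $p \in \Gamma[X]$. At such a point, the Christoffel symbols vanish, $\Gamma_{ij}^l(p) = 0$, and the first derivatives of the metric vanish as well, $\partial_k g_{ij}(p) = 0$; from the identity $\partial_k g^{ij} = -g^{il}g^{jm}\,\partial_k g_{lm}$, this also gives $\partial_k g^{ij}(p) = 0$. Since the identity to be proved is tensorial (both sides are well-defined independent of the choice of coordinates), verifying it at $p$ in normal coordinates suffices.

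Next I would apply the Leibniz rule to the product in the definition of $\pi$:
\[
\partial_k \pi
= (\partial_k g^{ij})\,\partial_i X \otimes \partial_j X
+ g^{ij}\,\partial_k\partial_i X \otimes \partial_j X
+ g^{ij}\,\partial_i X \otimes \partial_k\partial_j X.
\]
At the point $p$ in normal coordinates, the first term drops out. For the remaining two terms, I would use the decomposition $\partial_k\partial_i X = A_{ik} + \Gamma_{ik}^l\,\partial_l X$ (which is just the Gauss formula rearranged: the tangential part of the second derivative is given by the Christoffel symbols, and the normal part is the second fundamental form). Since $\Gamma_{ik}^l(p) = 0$, at the point $p$ we simply have $\partial_k\partial_i X = A_{ik}$, and substituting yields the claimed identity
\[
\partial_k \pi
= g^{ij}\,A_{ik} \otimes \partial_j X
+ g^{ij}\,\partial_i X \otimes A_{jk}
\]
at $p$. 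Tensoriality then gives the identity at every point.

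The main (and only) obstacle here is really just bookkeeping: making sure that the tangential pieces $\Gamma_{ik}^l\,\partial_l X$ that appear in $\partial_k\partial_i X$ combine with the $\partial_k g^{ij}$ contribution to yield zero away from normal coordinates as well. Using normal coordinates sidesteps this entirely, which is the reason for the choice. As an alternative if one prefers not to invoke coordinate changes, one can carry out the cancellation directly: write $\partial_k g^{ij} = -g^{ia}g^{jb}(\partial_k\partial_a X\cdot\partial_b X + \partial_a X\cdot\partial_k\partial_b X)$, note that only the tangential parts of $\partial_k\partial_a X$ and $\partial_k\partial_b X$ contribute to these inner products with tangent vectors, and observe that exactly these tangential pieces are also produced by the last two terms in the Leibniz expansion via the Gauss formula, with opposite sign. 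Either route is routine; I would favour the normal-coordinate argument for brevity, consistent with the Appendix's stated strategy of using geodesic normal coordinates.
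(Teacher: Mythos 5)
Your proposal is correct and is essentially the paper's own argument: the Appendix proves this lemma precisely by a direct calculation in geodesic normal coordinates, differentiating $\pi = g^{ij}\,\partial_i X \otimes \partial_j X$ and using that $\Gamma_{ij}^k$ and $\partial_k g^{ij}$ vanish at the chosen point, so that $\partial_k\partial_i X = A_{ik}$ there. The tensoriality remark and the optional coordinate-free cancellation are fine, but add nothing beyond the paper's route.
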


\begin{proof}
	This follows from a direct calculation in geodesic normal coordinates. 
\end{proof}

\begin{lem} 
	\label{Laplacian.of.pi}
	The component-wise Laplacian of $\pi$ is given by 
	\begin{align*} 
	\Delta \pi 
	&= g^{ij} \, \partial_i \vec{H} \otimes \partial_j X + g^{ij} \, \partial_i X \otimes \partial_j \vec{H} + 2 \, g^{ip} \, g^{jq} \, (\vec{H} \cdot A_{ij}) \, \partial_p X \otimes \partial_q X \\ 
	&- 2 \, g^{ip} \, g^{jq} \, g^{kl} \, (A_{ik} \cdot A_{jl}) \, \partial_p X \otimes \partial_q X + 2 \, g^{ij} \, g^{kl} \, A_{ik} \otimes A_{jl}. 
	\end{align*} 
\end{lem}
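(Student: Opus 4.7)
I would work in geodesic normal coordinates at an arbitrary point $p \in \Gamma$, as suggested by the preceding lemmas, so that at $p$ one has $g_{ij}(p)=\delta_{ij}$, $g^{ij}(p)=\delta^{ij}$, $\partial_k g^{ij}(p)=0$ and $\Gamma_{ij}^k(p)=0$. In particular $\partial_i\partial_j X(p)=A_{ij}(p)$, and the componentwise Laplace--Beltrami operator reduces at $p$ to
\[
  \Delta \pi(p) = g^{kl}(p)\,\partial_k\partial_l\pi(p).
\]
The starting point is the globally valid formula of Lemma~\ref{first.derivative.of.pi}. I would differentiate it once more and evaluate at $p$. The terms where the derivative hits a factor of $g^{ij}$ drop out, and the terms where the derivative hits $\partial_j X$ give, using $\partial_l\partial_j X(p)=A_{lj}$, contributions of the form $g^{ij} A_{ik}\otimes A_{lj}$ and $g^{ij}A_{li}\otimes A_{jk}$. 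Multiplying by $g^{kl}$ and relabeling dummy indices (using the symmetry of $A$ and $g$) shows these two terms are equal and together produce $2\,g^{ij}g^{kl}A_{ik}\otimes A_{jl}$, which matches the last term on the right-hand side.

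The remaining contributions are of the form $g^{kl}g^{ij}\,\partial_l A_{ik}\otimes \partial_j X$ and $g^{kl}g^{ij}\,\partial_i X\otimes \partial_l A_{jk}$. The key step is to decompose the derivative of the second fundamental form into normal and tangential components. For the tangential component, differentiating the orthogonality relation $A_{ik}\cdot\partial_m X=0$ and using $\partial_l\partial_m X(p)=A_{lm}$ yields
\[
  (\partial_l A_{ik})^{\mathrm{tan}}(p) = -g^{mn}(A_{ik}\cdot A_{lm})\,\partial_n X.
\]
For the normal component, the Codazzi equation $\nabla^\perp_l A_{ik}=\nabla^\perp_i A_{lk}$ at $p$ reduces, since the Christoffels vanish, to $(\partial_l A_{ik})^{\perp}(p)=(\partial_i A_{lk})^{\perp}(p)$.

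To convert the normal part into $\partial_i\vec H$, I would differentiate $\vec H=g^{kl}A_{kl}$ at $p$ (again using $\partial_i g^{kl}(p)=0$), split into normal and tangential parts by the same argument as above, and rearrange to obtain
\[
  g^{kl}(\partial_i A_{lk})^{\perp}(p) = \partial_i\vec H(p) + g^{mn}(\vec H\cdot A_{im})\,\partial_n X.
\]
Substituting this identity and the tangential-part formula back into $g^{kl}g^{ij}\,\partial_l A_{ik}\otimes \partial_j X$, and symmetrically treating $g^{kl}g^{ij}\,\partial_i X\otimes \partial_l A_{jk}$, produces the first two terms $g^{ij}\partial_i\vec H\otimes\partial_j X + g^{ij}\partial_i X\otimes\partial_j\vec H$, together with two further contractions: one of the form $g^{ip}g^{jq}(\vec H\cdot A_{ij})\,\partial_p X\otimes\partial_q X$ (with coefficient $2$ after combining the two symmetric pieces) and one of the form $g^{ip}g^{jq}g^{kl}(A_{ik}\cdot A_{jl})\,\partial_p X\otimes\partial_q X$ (with coefficient $-2$). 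Collecting all five contributions completes the identity; since the displayed formula is tensorial and holds at the arbitrarily chosen $p$, it is valid globally.

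The main bookkeeping obstacle is step~4: carefully matching up dummy indices so that the various $g$--contractions of $A$'s and of the tangential/normal parts of $\partial A$ assemble into exactly the claimed combinations, with the correct signs and factors of $2$. This is mechanical but index-heavy; everything else (Codazzi, the tangent/normal split, vanishing of $\partial g^{ij}$ at $p$) is standard Riemannian-geometric input.
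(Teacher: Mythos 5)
Your plan is correct and follows essentially the same route as the paper's proof: differentiate the (globally valid) formula of Lemma~\ref{first.derivative.of.pi} in geodesic normal coordinates at $p$, split $\partial_l A_{ik}$ into tangential and normal parts via the orthogonality relation and the Codazzi equation, identify the contracted normal part with $\partial_i \vec H$ up to the tangential correction $g^{mn}(\vec H\cdot A_{im})\,\partial_n X$, and collect terms. The only (immaterial) difference is organizational: the paper states the contracted Codazzi identity $(g^{kl}\partial_l A_{ik})^\perp=(\partial_i\vec H)^\perp$ directly and then subtracts the two tangential projections, whereas you apply Codazzi before contracting and treat the $\vec H$-identity as a separate step.
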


\begin{proof} 
	Fix a point $p \in M$. We again work in geodesic normal coordinates around $p$. We compute 
	\begin{align*} 
	\Delta \pi 
	&= g^{ij} \, g^{kl} \, \partial_l A_{ik} \otimes \partial_j X + g^{ij} \, g^{kl} \, \partial_i X \otimes \partial_l A_{jk} \\ 
	&+ g^{ij} \, g^{kl} \, A_{ik} \otimes A_{jl} + g^{ij} \, g^{kl} \, A_{il} \otimes A_{jk} 
	\end{align*} 
	at the point $p$. Using the Codazzi equations, we obtain 
	\[(g^{kl} \, \partial_l A_{ik})^\perp = (\partial_i \vec{H})^\perp\] 
	at $p$. Moreover,
	\[\pi(g^{kl} \, \partial_l A_{ik}) = g^{kl} \, g^{pq} \, (\partial_l A_{ik} \cdot \partial_p X) \, \partial_q X = -g^{kl} \, g^{pq} \, (A_{ik} \cdot \partial_l \partial_p X) \, \partial_q X = -g^{kl} \, g^{pq} \, (A_{ik} \cdot A_{lp}) \, \partial_q X\]
	and 
	\[\pi(\partial_i \vec{H}) = g^{pq} \, (\partial_i \vec{H} \cdot \partial_p X) \, \partial_q X = -g^{pq} \, (\vec{H} \cdot \partial_i \partial_p X) \, \partial_q X = -g^{pq} \, (\vec{H} \cdot A_{ip}) \, \partial_q X\] 
	at $p$. Since $g^{kl} \, \partial_l A_{ik} = \pi(g^{kl} \, \partial_l A_{ik}) + (g^{kl} \, \partial_l A_{ik})^\perp$ and $\partial_i \vec{H} = \pi(\partial_i \vec{H}) + (\partial_i \vec{H})^\perp$, we conclude that 
	\[g^{kl} \, \partial_l A_{ik} = \partial_i \vec{H} + g^{pq} \, (\vec{H} \cdot A_{ip}) \, \partial_q X - g^{kl} \, g^{pq} \, (A_{ik} \cdot A_{lp}) \, \partial_q X\] 
	at $p$. Thus, 
	\begin{align*} 
	\Delta \pi 
	&= g^{ij} \, \partial_i \vec{H} \otimes \partial_j X + g^{ij} \, \partial_i X \otimes \partial_j \vec{H} \\ 
	&+ g^{ij} \, g^{pq} \, (\vec{H} \cdot A_{ip}) \, \partial_q X \otimes \partial_j X + g^{ij} \, g^{pq} \, (\vec{H} \cdot A_{jp}) \, \partial_i X \otimes \partial_q X \\ 
	&- g^{ij} \, g^{kl} \, g^{pq} \, (A_{ik} \cdot A_{lp}) \, \partial_q X \otimes \partial_j X - g^{ij} \, g^{kl} \, g^{pq} \, (A_{jk} \cdot A_{lp}) \, \partial_i X \otimes \partial_q X \\ 
	&+ g^{ij} \, g^{kl} \, A_{ik} \otimes A_{jl} + g^{ij} \, g^{kl} \, A_{il} \otimes A_{jk} 
	\end{align*} 
	at $p$. This proves the assertion. 
\end{proof}

\begin{lem}
	\label{heat.equation.for.pi}
	We have 
	\[\frac{\partial}{\partial t} \pi - \Delta \pi = 2 \, g^{ip} \, g^{jq} \, g^{kl} \, (A_{ik} \cdot A_{jl}) \, \partial_p X \otimes \partial_q X - 2 \, g^{ij} \, g^{kl} \, A_{ik} \otimes A_{jl},\] 
	where $\Delta \pi$ denotes the component-wise Laplacian.
\end{lem}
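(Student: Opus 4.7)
The proof is essentially a direct combination of the two preceding lemmas, with the key observation being a cancellation of mean-curvature terms.

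My plan is to simply subtract the expression for $\Delta \pi$ in Lemma~\ref{Laplacian.of.pi} from the expression for $\partial_t \pi$ in Lemma~\ref{time.derivative.of.pi}, both of which are valid at an arbitrary point $p \in M$ computed in geodesic normal coordinates. The two expressions share the common first-order terms $g^{ij} \partial_i \vec H \otimes \partial_j X + g^{ij} \partial_i X \otimes \partial_j \vec H$, which cancel immediately upon subtraction.

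Next, I would verify that the $(\vec H \cdot A_{\cdot\cdot})$ terms also cancel. The time-derivative expression contributes $2 \, g^{ik} g^{jl} (\vec H \cdot A_{kl}) \, \partial_i X \otimes \partial_j X$, while the Laplacian expression contributes $2 \, g^{ip} g^{jq} (\vec H \cdot A_{ij}) \, \partial_p X \otimes \partial_q X$. Relabeling dummy indices $(i,j,k,l) \mapsto (p,q,i,j)$ in the first and using the symmetry of $g^{ij}$ and $A_{ij}$ shows the two quantities are identical, so they cancel.

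What remains on the right-hand side are precisely the two $A\cdot A$ terms from $\Delta \pi$, with the opposite sign, yielding
\[
\frac{\partial}{\partial t} \pi - \Delta \pi = 2 \, g^{ip} g^{jq} g^{kl} (A_{ik} \cdot A_{jl}) \, \partial_p X \otimes \partial_q X - 2 \, g^{ij} g^{kl} A_{ik} \otimes A_{jl},
\]
as claimed. Since the identity is tensorial, verifying it at an arbitrary point in geodesic normal coordinates suffices. There is no genuine obstacle here: all the hard work has already been absorbed into Lemma~\ref{time.derivative.of.pi} and Lemma~\ref{Laplacian.of.pi} (the latter requires the Codazzi identity to simplify $g^{kl} \partial_l A_{ik}$, but that step is already done). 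The only thing to be careful about is bookkeeping the index relabelings correctly when matching the two mean-curvature-linear contributions.
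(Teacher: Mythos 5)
Your proposal is correct and coincides with the paper's proof, which likewise obtains the identity simply by subtracting the formula of Lemma~\ref{Laplacian.of.pi} from that of Lemma~\ref{time.derivative.of.pi}, the gradient-of-$\vec H$ terms and the $(\vec H\cdot A)$ terms cancelling after an index relabelling. Your added bookkeeping of the dummy-index relabelling is accurate (only the symmetry of $g^{ij}$ is actually needed there), so there is nothing to fix.
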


\begin{proof} 
	This follows from Lemma \ref{time.derivative.of.pi} and Lemma \ref{Laplacian.of.pi}. 
\end{proof}

In the following, Latin indices will run from $1$ to $\dimGa$, and Greek indices will run from $1$ to $\dimR$.

\begin{lem}
	\label{heat.equation.for.pi.v2}
	We have 
	\[\frac{\partial}{\partial t} \pi_{\alpha\beta} - \Delta \pi_{\alpha\beta} = 2 \sum_\mu g^{kl} \, \partial_k \pi_{\alpha\mu} \, \partial_l \pi_{\beta\mu} - 4 \sum_{\mu,\nu} g^{kl} \, \pi_{\mu\nu} \, \partial_k \pi_{\alpha\mu} \, \partial_l \pi_{\beta\nu}.\] 
\end{lem}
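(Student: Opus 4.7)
The plan is to reduce the statement to the already-established Lemma~\ref{heat.equation.for.pi}, by rewriting the right-hand side there (which is expressed in $A_{ij}$ and $\partial_i X$) as a combination of $g^{kl} \partial_k \pi_{\alpha\mu} \partial_l \pi_{\beta\mu}$ and $g^{kl} \pi_{\mu\nu} \partial_k \pi_{\alpha\mu} \partial_l \pi_{\beta\nu}$. The key inputs will be Lemma~\ref{first.derivative.of.pi} giving $\partial_k \pi = g^{ij} A_{ik} \otimes \partial_j X + g^{ij} \partial_i X \otimes A_{jk}$, together with two orthogonality relations used repeatedly: (i) $A_{ij} \cdot \partial_k X = 0$ because $A_{ij}$ is normal to the tangent space, and (ii) $\sum_\mu \pi_{\mu\nu} \partial_j X_\mu = \partial_j X_\nu$ because $\pi$ is the orthogonal projection onto the tangent space.

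First I would compute the component-wise product $\sum_\mu \partial_k \pi_{\alpha\mu} \partial_l \pi_{\beta\mu}$ using the two-term expression from Lemma~\ref{first.derivative.of.pi}. This yields four cross terms; two of them carry a factor $\sum_\mu \partial_j X_\mu (A_{ql})_\mu$ and therefore vanish by~(i). The remaining two terms, after using $\sum_\mu \partial_j X_\mu \partial_q X_\mu = g_{jq}$ and $\sum_\mu (A_{jk})_\mu (A_{ql})_\mu = A_{jk} \cdot A_{ql}$, simplify to
\begin{equation*}
\sum_\mu \partial_k \pi_{\alpha\mu} \partial_l \pi_{\beta\mu} = g^{ij} (A_{ik})_\alpha (A_{jl})_\beta + g^{ij} g^{pq} (A_{jk} \cdot A_{ql}) \partial_i X_\alpha \partial_p X_\beta .
\end{equation*}
Contracting with $g^{kl}$ and relabelling produces exactly $g^{ij} g^{kl} (A_{ik})_\alpha (A_{jl})_\beta + g^{ip} g^{jq} g^{kl} (A_{ik}\cdot A_{jl}) \partial_p X_\alpha \partial_q X_\beta$.

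Next I would compute $\sum_\mu \pi_{\mu\nu} \partial_k \pi_{\alpha\mu}$. Applying the two pieces of $\partial_k \pi_{\alpha\mu}$: the second piece contains $\sum_\mu \pi_{\mu\nu}(A_{jk})_\mu$ which vanishes because $\pi$ kills normal vectors; the first piece contracts via (ii) to $g^{ij}(A_{ik})_\alpha \partial_j X_\nu$. Multiplying by $\partial_l \pi_{\beta\nu}$ and summing over $\nu$, the same argument as in the previous step kills one of the two cross terms, and the surviving one gives $\sum_{\mu,\nu} \pi_{\mu\nu} \partial_k \pi_{\alpha\mu} \partial_l \pi_{\beta\nu} = g^{ij}(A_{ik})_\alpha (A_{jl})_\beta$. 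Therefore $g^{kl}$-contraction yields $g^{ij}g^{kl}(A_{ik})_\alpha (A_{jl})_\beta$.

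Finally I would combine the two identities with the coefficients $+2$ and $-4$ prescribed by the statement:
\begin{equation*}
2 g^{kl} \sum_\mu \partial_k \pi_{\alpha\mu} \partial_l \pi_{\beta\mu} - 4 g^{kl} \sum_{\mu,\nu} \pi_{\mu\nu} \partial_k \pi_{\alpha\mu} \partial_l \pi_{\beta\nu} = -2 g^{ij}g^{kl}(A_{ik})_\alpha (A_{jl})_\beta + 2 g^{ip}g^{jq}g^{kl}(A_{ik}\cdot A_{jl}) \partial_p X_\alpha \partial_q X_\beta ,
\end{equation*}
which is precisely the $(\alpha,\beta)$-component of the right-hand side of Lemma~\ref{heat.equation.for.pi}. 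The heart of the argument is purely bookkeeping; the only genuine ingredients are the factorisation of $\partial_k \pi$ from Lemma~\ref{first.derivative.of.pi} and the two orthogonality relations. I expect the main obstacle to be organising the index calculus cleanly so that the cancellations induced by $A\perp TX$ and by $\pi$ being a projection are transparent, rather than any analytic difficulty.
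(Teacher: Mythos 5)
Your proposal is correct and follows essentially the same route as the paper: the paper's proof also reduces the statement to Lemma~\ref{heat.equation.for.pi} by asserting exactly your two identities for $\sum_\mu g^{kl}\partial_k\pi_{\alpha\mu}\partial_l\pi_{\beta\mu}$ and $\sum_{\mu,\nu} g^{kl}\pi_{\mu\nu}\partial_k\pi_{\alpha\mu}\partial_l\pi_{\beta\nu}$, which you verify in detail via Lemma~\ref{first.derivative.of.pi} and the orthogonality relations $A_{ij}\perp\partial_k X$ and $\pi\,\partial_j X=\partial_j X$. Your index bookkeeping and the final combination with coefficients $+2$ and $-4$ match the paper's argument.
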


\begin{proof} 
	We compute
	\[\sum_\mu g^{kl} \, \partial_k \pi_{\alpha\mu} \, \partial_l \pi_{\beta\mu} = [g^{ij} \, g^{kl} \, A_{ik} \otimes A_{jl} + g^{ip} \, g^{jq} \, g^{kl} \, (A_{ik} \cdot A_{jl}) \, \partial_p X \otimes \partial_q X]_{\alpha\beta}\] 
	and 
	\[\sum_{\mu,\nu} g^{kl} \, \pi_{\mu\nu} \, \partial_k \pi_{\alpha\mu} \, \partial_l \pi_{\beta\nu} = [g^{ij} \, g^{kl} \, A_{ik} \otimes A_{jl}]_{\alpha\beta}.\] 
	Hence, the assertion follows from Lemma \ref{heat.equation.for.pi}. 
\end{proof}

Finally, let us derive the evolution equation for the mean curvature vector $\vec{H}$.

\begin{lem}
	\label{heat.equation.for.H}
	The evolution of the mean curvature is given by 
	\[\frac{\partial}{\partial t} \vec{H} - \Delta \vec{H} = 2 \, g^{ik} \, g^{jl} \, (\vec{H} \cdot A_{kl}) \, A_{ij} + 2\, g^{ij} \, g^{kl} \, (\partial_i \vec{H} \cdot A_{jl}) \, \partial_k X,\]
	where $\Delta \vec{H}$ denotes the component-wise Laplacian.
\end{lem}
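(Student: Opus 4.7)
The plan is to compute $\partial_t \vec{H}$ and the component-wise Laplacian $\Delta \vec{H}$ separately at a fixed point $p \in \Gamma^0$ in geodesic normal coordinates at time $t$, and then subtract, paralleling the strategy used for $\pi$ in Lemmas \ref{time.derivative.of.pi} and \ref{Laplacian.of.pi}. Recall $\vec{H} = g^{ij} \, \partial_i \partial_j X - g^{ij} \, \Gamma_{ij}^k \, \partial_k X$, and that $\vec{H}$ is normal while the derivatives appearing on the right-hand side of the claim have both tangential and normal parts.

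First, I would differentiate the defining formula of $\vec{H}$ in $t$, using $\partial_t X = \vec{H}$ together with the already established evolution $\partial_t g^{ij} = 2 \, g^{ik} \, g^{jl} \, \vec{H} \cdot A_{kl}$. At $p$, normal coordinates give $\Gamma_{ij}^k|_p = 0$, so the term $\partial_t(g^{ij}) \, \Gamma_{ij}^k \, \partial_k X$ drops out, while the remaining contribution from $\partial_t \Gamma_{ij}^k|_p$ is handled via the standard first-variation formula $\partial_t \Gamma_{ij}^k = \tfrac{1}{2} g^{kl}(\nabla_i (\partial_t g_{jl}) + \nabla_j (\partial_t g_{il}) - \nabla_l (\partial_t g_{ij}))$; after contracting with $g^{ij}$ and using $\partial_t g_{ij} = -2\,\vec{H} \cdot A_{ij}$, this collapses to a tangential expression of the form $g^{ij} g^{kl}(\partial_i \vec{H} \cdot A_{jl}) \partial_k X$ (modulo terms that cancel against analogous contributions from $\Delta \vec{H}$).

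Second, for the component-wise Laplacian I would exploit the identity derived in the course of proving Lemma \ref{Laplacian.of.pi}, namely
\[ g^{kl} \, \partial_l A_{ik} = \partial_i \vec{H} + g^{pq} \, (\vec{H} \cdot A_{ip}) \, \partial_q X - g^{kl} \, g^{pq}\,(A_{ik} \cdot A_{lp}) \, \partial_q X \]
at $p$, which comes from the Codazzi equations together with the tangential/normal split of $g^{kl} \partial_l A_{ik}$. Applying one more divergence and tracing, while again using the Codazzi identity for the second derivative and the fact that $\partial_k g^{ij}|_p = 0$, yields $\Delta \vec{H}$ as a sum of $g^{ij} \partial_i \partial_j \vec{H}$-type terms plus purely quadratic $A$-contractions plus a tangential $\partial_k X$ piece.

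Subtracting the two expressions, the second-order terms in $X$ cancel automatically, and careful organisation of the remaining pieces produces exactly the two claimed terms: the normal contribution $2 \, g^{ik} g^{jl} (\vec{H}\cdot A_{kl}) A_{ij}$ and the tangential contribution $2 \, g^{ij} g^{kl} (\partial_i \vec{H} \cdot A_{jl}) \, \partial_k X$. The main obstacle I anticipate is bookkeeping: tracking tangential versus normal components of $\nabla A$ and $\nabla \vec{H}$, and verifying that the tangential pieces arising from $\partial_t \Gamma_{ij}^k$ combine coherently with the Codazzi/commutator-type corrections produced by $\Delta \vec{H}$ so that all cross-terms either cancel or assemble into the single stated tangential term with the correct coefficient $2$.
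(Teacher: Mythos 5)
Your first half is on the right track, but the overall plan has a genuine gap, and it stems from not noticing the one observation that makes the paper's proof short: since $v=\vec H$, the second--derivative term produced by differentiating $\vec H = g^{ij}\,\partial_i\partial_j X - g^{ij}\,\Gamma_{ij}^k\,\partial_k X$ in time is $g^{ij}\,\partial_i\partial_j(\partial_t X) = g^{ij}\,\partial_i\partial_j\vec H$, which \emph{is} the component-wise Laplacian $\Delta\vec H$ at $p$ in geodesic normal coordinates. So $\Delta\vec H$ never has to be computed at all: at $p$ one has
\begin{equation*}
\frac{\partial}{\partial t}\vec H - \Delta\vec H \;=\; \partial_t(g^{ij})\,\partial_i\partial_j X \;-\; g^{ij}\,\big(\partial_t\Gamma_{ij}^k\big)\,\partial_k X
\;=\; 2\,g^{ik}g^{jl}\,(\vec H\cdot A_{kl})\,A_{ij} \;-\; g^{ij}\,\big(\partial_t\Gamma_{ij}^k\big)\,\partial_k X,
\end{equation*}
using $\partial_i\partial_j X = A_{ij}$ at $p$ (because $\Gamma_{ij}^k(p)=0$). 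Moreover, your claim that the Christoffel variation collapses to the tangential term ``modulo terms that cancel against analogous contributions from $\Delta\vec H$'' misplaces the cancellation: contracting the first-variation formula with $\partial_t g_{ij}=-2\,\vec H\cdot A_{ij}$ gives
$g^{ij}\partial_t\Gamma_{ij}^k = -2\,g^{ij}g^{kl}(\partial_i\vec H\cdot A_{jl}) - 2\,g^{ij}g^{kl}(\vec H\cdot\partial_i A_{jl}) + 2\,g^{kl}(\vec H\cdot\partial_l\vec H)$,
and the last two terms cancel \emph{against each other} by the Codazzi identity $(g^{ij}\partial_i A_{jl})^\perp=(\partial_l\vec H)^\perp$ together with the fact that $\vec H$ is normal; nothing is left over to be absorbed by the Laplacian.

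The second half of your plan is where the argument would actually fail as written. The identity $g^{kl}\,\partial_l A_{ik} = \partial_i\vec H + g^{pq}(\vec H\cdot A_{ip})\,\partial_q X - g^{kl}g^{pq}(A_{ik}\cdot A_{lp})\,\partial_q X$ from the proof of Lemma~\ref{Laplacian.of.pi} is a coordinate identity valid only \emph{at the point} $p$ in geodesic normal coordinates (neither $\partial_l A_{ik}$ nor $\partial_i\vec H$ is tensorial, and $\partial\Gamma$ does not vanish at $p$), so you cannot ``apply one more divergence and trace'' to it; you would have to redo the computation covariantly, and the resulting Simons-type expansion of $\Delta\vec H$ produces curvature/quadratic terms your proposal does not track. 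You would then additionally need to reconcile that expansion with the $g^{ij}\partial_i\partial_j\vec H$ term sitting inside your formula for $\partial_t\vec H$ --- which is circular, since that term already equals $\Delta\vec H$ at $p$. The repair is simply to drop the separate computation of $\Delta\vec H$ and argue as above: the whole right-hand side comes from $\partial_t(g^{ij})\,\partial_i\partial_j X$ and from $-g^{ij}(\partial_t\Gamma_{ij}^k)\,\partial_k X$ after the Codazzi cancellation, yielding exactly $2\,g^{ik}g^{jl}(\vec H\cdot A_{kl})\,A_{ij} + 2\,g^{ij}g^{kl}(\partial_i\vec H\cdot A_{jl})\,\partial_k X$.
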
 

\begin{proof} 
	The mean curvature vector is given by  
	\[\vec{H} = g^{ij} \, \partial_i \partial_j X - g^{ij} \, \Gamma_{ij}^k \, \partial_k X\] 
	at each point in space-time. Let us fix a point $p$ and work in geodesic normal coordinates around $p$. In particular, $\Gamma_{ij}^k = 0$ at $p$. At the point $p$, we have 
	\[\frac{\partial}{\partial t} \vec{H} = g^{ij} \, \partial_i \partial_j (\frac{\partial}{\partial t} X) + \frac{\partial}{\partial t}(g^{ij}) \, \partial_i \partial_j X - g^{ij} \, (\frac{\partial}{\partial t} \Gamma_{ij}^k) \, \partial_k X.\] 
	This implies 
	\[\frac{\partial}{\partial t} \vec{H} - \Delta \vec{H} = 2 \, g^{ik} \, g^{jl} \, (\vec{H} \cdot A_{kl}) \, A_{ij} - g^{ij} \, (\frac{\partial}{\partial t} \Gamma_{ij}^k) \, \partial_k X\] 
	at $p$. We next compute 
	\begin{align*} 
	\frac{\partial}{\partial t} \Gamma_{ij}^k 
	&= \frac{1}{2} \, g^{kl} \, (\partial_i \frac{\partial}{\partial t} g_{jl} + \partial_j \frac{\partial}{\partial t} g_{il} - \partial_l \frac{\partial}{\partial t} g_{ij}) \\ 
	&= -g^{kl} \, (\partial_i (\vec{H} \cdot A_{jl}) + \partial_j (\vec{H} \cdot A_{il}) - \partial_l (\vec{H} \cdot A_{ij})) \\ 
	&= -g^{kl} \, ((\partial_i \vec{H} \cdot A_{jl}) + (\vec{H} \cdot \partial_i A_{jl})) \\ 
	&- g^{kl} \, ((\partial_j \vec{H} \cdot A_{il}) + (\vec{H} \cdot \partial_j A_{il})) \\ 
	&+ g^{kl} \, ((\partial_l \vec{H} \cdot A_{ij}) + (\vec{H} \cdot \partial_l A_{ij})) 
	\end{align*} 
	at $p$. Consequently, 
	\[g^{ij} \, \frac{\partial}{\partial t} \Gamma_{ij}^k = -2 \, g^{ij} \, g^{kl} \, \partial_i \vec{H} \cdot A_{jl} - 2 \, g^{ij} \, g^{kl} \, \vec{H} \cdot \partial_i A_{jl} + 2 \, g^{kl} \, \vec{H} \cdot \partial_l \vec{H}\]
	at $p$. Using the Codazzi equations, we obtain $(g^{ij} \, \partial_i A_{jl})^\perp = (\partial_l \vec{H})^\perp$ at $p$, hence 
	\[g^{ij} \, \frac{\partial}{\partial t} \Gamma_{ij}^k = -2 \, g^{ij} \, g^{kl} \, \partial_i \vec{H} \cdot A_{jl}\] 
	at $p$. Putting these facts together, we conclude that 
	\[\frac{\partial}{\partial t} \vec{H} - \Delta \vec{H} = 2 \, g^{ik} \, g^{jl} \, (\vec{H} \cdot A_{kl}) \, A_{ij} + 2\, g^{ij} \, g^{kl} \, (\partial_i \vec{H} \cdot A_{jl}) \, \partial_k X\]
	at $p$. This proves the assertion. 
\end{proof}

\begin{lem}
	\label{heat.equation.for.H.v2}
	The evolution of the mean curvature is given by 
	\[\frac{\partial}{\partial t} \vec{H}_\alpha - \Delta \vec{H}_\alpha = 2 \sum_\beta g^{kl} \, \partial_k \pi_{\alpha\beta} \, \partial_l \vec{H}_\beta + 4 \sum_{\beta,\mu} g^{kl} \, \partial_k \pi_{\alpha\mu} \, \partial_l \pi_{\beta\mu} \, \vec{H}_\beta,\]
	where $\Delta \vec{H}$ denotes the component-wise Laplacian.
\end{lem}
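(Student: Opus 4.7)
The plan is to derive the claimed componentwise identity by rewriting the right-hand side of Lemma~\ref{heat.equation.for.H} in terms of $\pi$ and $\vecH$, using Lemma~\ref{first.derivative.of.pi}. I would work at a fixed point $p$ in geodesic normal coordinates so that $\Gamma_{ij}^k = 0$ and $\partial_k g^{ij} = 0$ at $p$, and exploit the normality of $\vecH$: since $\vecH \cdot \partial_j X = 0$, differentiation gives $\partial_l \vecH \cdot \partial_j X = -\vecH \cdot A_{lj}$ at $p$. Written componentwise, Lemma~\ref{first.derivative.of.pi} reads
\begin{equation*}
\partial_k \pi_{\alpha\beta} = g^{ij}\,(A_{ik})_\alpha\,(\partial_j X)_\beta + g^{ij}\,(\partial_i X)_\alpha\,(A_{jk})_\beta .
\end{equation*}

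First, I would substitute this into $\sum_\beta g^{kl}\,\partial_k \pi_{\alpha\beta}\,\partial_l \vecH_\beta$. The contraction over $\beta$ produces two inner products, $\partial_l \vecH \cdot \partial_j X$ and $\partial_l \vecH \cdot A_{jk}$; the first collapses via the normality identity to $-\vecH \cdot A_{lj}$, yielding a term of the form $g^{ij}g^{kl}(A_{ik})_\alpha(\vecH \cdot A_{jl})$ with coefficient $-1$, while the second produces exactly $g^{ij}g^{kl}(\partial_i \vecH \cdot A_{jl})(\partial_k X)_\alpha$. Second, I would evaluate $\sum_{\beta,\mu} g^{kl}\,\partial_k\pi_{\alpha\mu}\,\partial_l\pi_{\beta\mu}\,\vecH_\beta$ by using the tensorial identity already computed in the proof of Lemma~\ref{heat.equation.for.pi.v2}. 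Upon contracting with $\vecH_\beta$, the $\partial_p X \otimes \partial_q X$ summand is annihilated by normality, leaving only $g^{ij}g^{kl}(A_{ik})_\alpha(\vecH\cdot A_{jl})$.

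Combining these two sums with the prescribed prefactors $2$ and $4$, the $A$-$A$-$\vecH$ contribution has total weight $-2+4 = 2$, and after a short index relabeling (using $g^{ij}=g^{ji}$ and $A_{ij}=A_{ji}$) matches the term $2\,g^{ik}g^{jl}(\vecH\cdot A_{kl})A_{ij}$ in Lemma~\ref{heat.equation.for.H}; the $\partial\vecH$-$A$-$\partial X$ contribution, present only in the first sum, inherits its coefficient $2$ and matches the remaining term. Invoking Lemma~\ref{heat.equation.for.H} then closes the argument. The main obstacle I anticipate is purely bookkeeping: keeping track of the several contractions of $g^{ij}$'s and confirming that the sign cancellation $-2+4=2$ occurs at the level of the full tensorial structure (with matching free and summed indices), rather than merely as a numerical coincidence.
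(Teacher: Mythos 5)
Your proposal is correct and follows essentially the same route as the paper's proof: it contracts the identity $\sum_\mu g^{kl}\,\partial_k\pi_{\alpha\mu}\,\partial_l\pi_{\beta\mu}=[g^{ij}g^{kl}A_{ik}\otimes A_{jl}+g^{ip}g^{jq}g^{kl}(A_{ik}\cdot A_{jl})\,\partial_p X\otimes\partial_q X]_{\alpha\beta}$ with $\vec{H}_\beta$ (normality killing the tangential part), evaluates $\sum_\beta g^{kl}\,\partial_k\pi_{\alpha\beta}\,\partial_l\vec{H}_\beta$ via Lemma~\ref{first.derivative.of.pi} and $\partial_l\vec{H}\cdot\partial_j X=-\vec{H}\cdot A_{jl}$, and the resulting cancellation $2\cdot(-1)+4\cdot 1=2$ recovers exactly the right-hand side of Lemma~\ref{heat.equation.for.H}, which is precisely how the paper argues.
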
 

\begin{proof}
	The identity 
	\[\sum_\mu g^{kl} \, \partial_k \pi_{\alpha\mu} \, \partial_l \pi_{\beta\mu} = [g^{ij} \, g^{kl} \, A_{ik} \otimes A_{jl} + g^{ip} \, g^{jq} \, g^{kl} \, (A_{ik} \cdot A_{jl}) \, \partial_p X \otimes \partial_q X]_{\alpha\beta}\] 
	gives 
	\[\sum_{\beta,\mu} g^{kl} \, \partial_k \pi_{\alpha\mu} \, \partial_l \pi_{\beta\mu} \, \vec{H}_\beta = [g^{ij} \, g^{kl} \, (\vec{H} \cdot A_{jl}) \, A_{ik}]_\alpha.\] 
	Moreover, using the identity $\partial_l \vec{H} \cdot \partial_j X = -\vec{H} \cdot \partial_l \partial_j X = -\vec{H} \cdot A_{jl}$, we obtain 
	\begin{align*} 
	\sum_\beta g^{kl} \, \partial_k \pi_{\alpha\beta} \, \partial_l \vec{H}_\beta 
	&= [g^{ij} \, g^{kl} \, (\partial_l \vec{H} \cdot A_{jk}) \, \partial_i X + g^{ij} \, g^{kl} \, (\partial_l \vec{H} \cdot \partial_j X) \, A_{ik}]_\alpha \\ 
	&= [g^{ij} \, g^{kl} \, (\partial_l \vec{H} \cdot A_{jk}) \, \partial_i X - g^{ij} \, g^{kl} \, (\vec{H} \cdot A_{jl}) \, A_{ik}]_\alpha. 
	\end{align*} 
	Hence, the assertion follows from Lemma \ref{heat.equation.for.H}. 
\end{proof}

\bibliographystyle{siamplain}
\bibliography{MCF_codim_literature}

\end{document}